\author{Matthieu Arfeux, Guizhen Cui}
\title {Arbitrary large number of non trivial rescaling limits}
\date{}
\newtheorem{theorem} {Theorem}[section]
\newtheorem{proposition}[theorem]{Proposition}
\newtheorem{lemma}[theorem]{Lemma}
\newtheorem{definition}[theorem]{Definition}
\newtheorem{question}[theorem]{Question}
\theoremstyle{plain}
\newtheorem{theoremint} {Theorem}
\newtheorem{corollaryint} {Corollary}
\newtheorem*{definitionint}{Definition}
\newtheorem{Thmbis} {Theorem}
\newtheorem{remark}[theorem]{Remark}
\renewenvironment{proof}{\noindent{\bf Proof. }}{\hfill{$\square$} \vskip.3cm}
\def\cal{\mathcal}
\def\C{{\mathbb C}}
\def\D{{\mathbb D}}
\def\F{{\mathcal F}}
\def\K{{\mathbb L}}
\def\N{{\mathbb N}}
\def\P{{\mathbb P}}
\def\S{{\mathbb S}}
\def\T{{\mathcal T}}
\def\V{{\mathcal V}}
\def\Rat{{\rm Rat}}
\def\revTC{{\overline{\bf rev}}}
\def\Oub{{\Pi}}
 \def\epsilon{{\varepsilon}}
\def\card{{\rm card\,}}
\def\deg{{\rm deg}}
\begin{document}

\maketitle

\begin{abstract}
We construct a family of rational map sequences providing an arbitrary large number of independent rescaling limits of non monomial type. From this, we deduce the existence of a family of rational maps providing a non trivial dynamics on the Berkovich projective line over the field of formal Puiseux series.
\end{abstract}



\section{Introduction.}

 Let us denote by ${\S:=P^1(\C)}$ the Riemann sphere. In this paper we are interested in the behavior of the elements of  $\Rat_d$ the set of rational maps $f:{\mathbb S}\to {\mathbb S}$ of exact degree $d$ under the iteration by composition. More precisely, we are interested in the phenomena of existence of rescaling limits detected in \cite{Stim} but defined for the first time in \cite{K2} as follows. 
 \begin{definitionint}
For a sequence of rational maps $(f_n)_n$ in $\Rat_d$, a {rescaling} is a sequence of Moebius transformations $(M_n)_n$ such that there exist $k\in\N$ and a rational map $g$ of degree $\geq 2$ such that
\[M_n\circ f_n^k\circ M_n^{-1}\to g\]
uniformly on compact subsets of $\S$ with finitely many points removed.

If this $k$ is minimal then it is called the {\it rescaling period} for $(f_n)_n$ at $(M_n)_n$ and $g$ a {\it rescaling limit} for $(f_n)_n$.
\end{definitionint}
In that paper Jan Kiwi introduced the notions of independence of rescalings and of dynamical dependence of rescalings below (these notions are not used in this paper).

\begin{definitionint}[Independence of rescalings]\label{IR}
We say that two rescalings $(M_n)_n$ and $(N_n)_n$ of a sequence of rational maps $(f_n)_n$ are independent and write $N_n\sim M_n$ if \[{N_n\circ M_n^{-1}\to \infty}\] in $\Rat_1$. That is, for every compact set $K$ in $\Rat_1$, the sequence $N_n\circ M_n^{-1}\notin K$ for $n$ big enough.
The rescalings are said to be equivalent if $N_n\circ M_n^{-1}\to M$ in $\Rat_1$.
\end{definitionint}


\begin{definitionint}[Dynamical dependence of rescalings]\label{DDR}
Given a sequence $(f_n)_n\in\Rat_d$ and given $(M_n)_n$ and $(N_n)_n$ of period dividing $q$. We say that $(M_n)_n$ and $(N_n)_n$ are dynamically dependent if, for some subsequences $(M_{n_k})_{n_k}$ and $(N_{n_k})_{n_k}$, there exist $1\leq m\leq q$, finite subsets $S_1,S_2$ of $\S$ and non constant rational maps $g_1,g_2$ such that
 \[L^{-1}_{n_k}\circ f^m_{n_k}\circ M_{n_k}\to g_1\]
uniformly on compact subsets of $\S\setminus S_1$ and
\[M^{-1}_{n_k}\circ f^{q-m}_{n_k}\circ L_{n_k}\to g_2\]
uniformly on compact subsets of $\S\setminus S_2$.
\end{definitionint}

Jan Kiwi proved the two following results.

\begin{Thmbis}\label{alpha} \cite{K2} For every sequence in $\Rat_d$ for $d\geq 2$, there are at most $2d-2$ classes of dynamically independent rescalings with a non postcritically finite rescaling limit. 
\end{Thmbis}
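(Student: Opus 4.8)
The plan is to transport the question to the non-Archimedean dynamics of a single rational map over the field of Puiseux series and then run a Riemann--Hurwitz count. After extracting a subsequence, one models $(f_{n})_{n}$ by a rational map $\mathbf{f}$ of degree $d$ over the algebraically closed valued field $\mathbb{L}$, the completion of $\bigcup_{m\ge 1}\mathbb{C}((t^{1/m}))$; its residue field is $\mathbb{C}$, so the residue characteristic is $0$. The map $\mathbf{f}$ acts on the Berkovich projective line $\mathsf{P}^{1}_{\mathbb{L}}$, equivalently on the associated tree of spheres, and I would invoke the dictionary relating this action to rescalings: a rescaling of period $q$ of $(f_{n})_{n}$ corresponds to a type~II point $\zeta$ with $\mathbf{f}^{q}(\zeta)=\zeta$ and $\deg_{\zeta}(\mathbf{f}^{q})\ge 2$, its rescaling limit being M\"obius-conjugate to the reduction $\overline{\mathbf{f}^{q}}_{\zeta}$ on the space of directions $T_{\zeta}\cong\mathbb{P}^{1}_{\mathbb{C}}$; two rescalings are dynamically dependent in the sense of Definition~\ref{DDR} if and only if the corresponding type~II points lie on a common $\mathbf{f}$-cycle; and the rescaling limit is postcritically finite if and only if that reduction is. It therefore suffices to bound by $2d-2$ the number of $\mathbf{f}$-cycles of type~II points whose first-return reduction has degree $\ge 2$ and is not postcritically finite.

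To each such cycle $\mathcal{C}=\{\zeta_{0},\dots,\zeta_{q-1}\}$ I attach a critical point of $\mathbf{f}$ as follows. Its first-return reduction $g:=\overline{\mathbf{f}^{q}}_{\zeta_{0}}$ is a non-postcritically-finite rational map over $\mathbb{C}$, so it has a critical direction $\vec v\in T_{\zeta_{0}}$ whose forward $g$-orbit is infinite. In residue characteristic $0$ the directional multiplicity of $\mathbf{f}^{q}$ at $\zeta_{0}$ in the direction $\vec v$ equals the local degree of $g$ at $\vec v$, hence is $\ge 2$, so the Berkovich disc $B_{\zeta_{0}}(\vec v)$ contains a classical critical point of $\mathbf{f}^{q}$; pushing it forward at most $q-1$ steps yields a critical point $c=c(\mathcal{C})\in\mathbb{P}^{1}(\mathbb{L})$ of $\mathbf{f}$, some $\zeta\in\mathcal{C}$, and a direction $\vec w\in T_{\zeta}$ such that the direction from $\zeta$ toward $c$ is $\vec w$ and $\vec w$ has infinite forward orbit under the first-return reduction at $\zeta$ (the last point because $\mathbf{f}^{j}$ semiconjugates one return map to the other with finite fibres).

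I claim $\mathcal{C}\mapsto c(\mathcal{C})$ is injective on the set of cycles in question, which then forces at most $2d-2$ of them since $\mathbf{f}$ has at most $2d-2$ critical points. Suppose distinct such cycles $\mathcal{C},\mathcal{C}'$ satisfied $c(\mathcal{C})=c(\mathcal{C}')=:c$, witnessed by $(\zeta,\vec w)$ of period $q$ and $(\zeta',\vec w')$ of period $q'$ as above, both directions pointing toward $c$. Put $Q:=\mathrm{lcm}(q,q')$ and $x_{k}:=\mathbf{f}^{kQ}(c)$ for $k\in\mathbb{N}$. Since pushforward of directions commutes with $\mathbf{f}^{Q}$ at the fixed point $\zeta$, the direction from $\zeta$ toward $x_{k}$ runs through the (infinite, hence injective) forward orbit of $\vec w$ under the $(Q/q)$-th power of the return map, so these directions are pairwise distinct in $k$; likewise at $\zeta'$. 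Let $m_{k}$ be the point of the arc $[\zeta,\zeta']$ in the tree where the path toward $x_{k}$ branches off. If $m_{k}$ is interior to $[\zeta,\zeta']$ or equals $\zeta'$, then the direction from $\zeta$ toward $x_{k}$ is the fixed direction from $\zeta$ toward $\zeta'$, so only one $k$ is possible; if $m_{k}=\zeta$, then the direction from $\zeta'$ toward $x_{k}$ is the fixed direction from $\zeta'$ toward $\zeta$, so again only one $k$. Hence at most two values of $k$ occur, contradicting $k\in\mathbb{N}$. So $\mathcal{C}\mapsto c(\mathcal{C})$ is injective, and since dynamically independent rescaling classes are exactly distinct cycles and a non-postcritically-finite rescaling limit is exactly a non-postcritically-finite reduction, there are at most $2d-2$ of them.

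The main obstacle is the dictionary used at the outset: that every rescaling of a (subsequence of a) sequence in $\Rat_{d}$ is realised by a type~II periodic point of the non-Archimedean limit with first-return reduction of degree $\ge 2$, that the rescaling limit is this reduction, and --- this being essential for the sharp bound --- that dynamical dependence in the sense of Definition~\ref{DDR} coincides with lying on a common cycle. Two further points, routine given the machinery but needing care, are the choice of a subsequence along which the whole non-Archimedean limit (not merely each rescaling) is well defined, and the residue-characteristic-zero Riemann--Hurwitz bookkeeping converting a ramified direction of the reduction into a genuine critical point of $\mathbf{f}$ inside the corresponding disc. This is, in outline, Kiwi's strategy.
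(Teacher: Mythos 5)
The paper does not prove this statement: it is quoted from Kiwi \cite{K2}, and the paper itself only remarks that Kiwi's proof uses non-Archimedean dynamics while \cite{A1} gives an alternative proof via trees of spheres. Your sketch reconstructs Kiwi's non-Archimedean route --- pass to a rational map $\mathbf{f}$ over the Puiseux field, translate rescaling cycles into cycles of type~II points, and assign to each non-postcritically-finite cycle a distinct critical point of $\mathbf{f}$ --- so the architecture is the right one and correctly attributed.

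There is, however, a gap in the injectivity step which you do not flag. You claim that, because the tangent map at the $\mathbf{f}^{Q}$-fixed type~II point $\zeta$ commutes with pushforward of directions, the direction at $\zeta$ toward $x_{k}=\mathbf{f}^{kQ}(c)$ runs through the (injective) $g_{Q}$-orbit of $\vec w$. This commutation holds for the tangent map on directions, but it does \emph{not} automatically propagate to the direction at $\zeta$ toward a classical point deep inside a directional ball: for $\xi\in B_{\zeta}(\vec v)$ one only has that the direction toward $\mathbf{f}^{Q}(\xi)$ equals $g_{Q}(\vec v)$ when $\mathbf{f}^{Q}\bigl(B_{\zeta}(\vec v)\bigr)$ is again a directional ball rather than all of the Berkovich line. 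By Rivera-Letelier's description of images of discs, the latter happens exactly when $B_{\zeta}(\vec v)$ contains a further $\mathbf{f}^{Q}$-preimage of $\zeta$ (equivalently the ball degree strictly exceeds the directional multiplicity), and nothing in the hypothesis ``$\vec w$ has infinite forward $g_{Q}$-orbit'' excludes this. When a ball maps over the whole sphere, $x_{k+1}$ can fall in an arbitrary direction at $\zeta$, and the ``pairwise distinct directions'' conclusion on which the entire injectivity (and hence the $2d-2$ count) rests breaks down. Kiwi's actual argument has to track this excess critical mass when ball degree exceeds directional degree rather than propagate a classical orbit through the tangent map. You did flag the subsequence extraction and the ramified-direction-to-critical-point bookkeeping as routine-but-needing-care, which is fair, but the good-reduction-on-discs issue above is the step that actually fails as written and needs to be addressed.
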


\begin{Thmbis}\label{beta} \cite{K2} For every sequence in $\Rat_2$, there are at most $2$ classes of dynamically independent rescalings. 
\end{Thmbis}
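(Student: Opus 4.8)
The plan is to translate the whole question into non-archimedean dynamics and then into a finite counting problem. First I would encode the sequence $(f_n)_n$ as a single rational map $\mathbf f$ of degree $2$ with coefficients in the field $\mathbb L$ of formal Puiseux series over $\C$, acting on the Berkovich projective line $\mathbb P^1_{\mathrm{Berk}}$ over $\mathbb L$; this is the point of view underlying the paper, and the one in which Theorem~\ref{beta} is naturally stated. Under the standard dictionary (Kiwi, Rivera--Letelier, Faber), a rescaling $(M_n)_n$ corresponds to a periodic point $\xi$ of $\mathbf f$ of type~II, its rescaling period corresponds to the period $p$ of $\xi$, and the rescaling limit $g$ corresponds, after identifying the spaces of directions at $\xi$ and at $\mathbf f^p(\xi)$ with $\mathbb P^1(\overline{\mathbb L})$, to the reduction $\mathbf f^p_\xi$ of the first return map. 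A short check shows that two rescalings $(M_n)_n$ and $(N_n)_n$ are dynamically dependent in the sense of Definition~\ref{DDR} precisely when the associated type~II points lie in a common $\mathbf f$-cycle: the factorisation $f_n^q=f_n^{q-m}\circ f_n^m$ in that definition is the shadow of the factorisation of $\mathbf f^q$ along the cycle, and the partial reductions occurring there are automatically non constant. Hence ``at most $2$ classes of dynamically independent rescalings'' becomes the assertion that $\mathbf f$ has at most two periodic cycles of type~II points along which the first return reduction has degree $\geq 2$.

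Next I would reduce this counting to the geometry of the ramification locus of $\mathbf f$. Since $\deg\mathbf f=2$, the local degree $\deg_\xi\mathbf f$ is $1$ or $2$ everywhere, the reduction of a $p$-cycle has degree $\prod_{i=0}^{p-1}\deg_{\mathbf f^i(\xi)}\mathbf f$, and this is $\geq 2$ if and only if the cycle passes through a point of local degree $2$. In residue characteristic $0$ the set of such points, the ramification locus $\mathcal R(\mathbf f)$, is the segment $[c_1,c_2]$ joining the two classical critical points of $\mathbf f$ (possibly degenerate, in which case there is nothing to prove), and every interior point of $[c_1,c_2]$ has local degree $2$. So every cycle we must count meets the interior of $[c_1,c_2]$, and the problem becomes: bound the number of periodic cycles of $\mathbf f$ meeting this one segment.

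For the bound itself I would proceed in two stages. For cycles whose first return reduction is not postcritically finite, Theorem~\ref{alpha} already gives the bound $2d-2=2$. It then remains to show that allowing a cycle with a postcritically finite reduction of degree $\geq 2$ still keeps the total at most $2$, and here I would exploit the fact that $\mathbf f$ has only two critical points. A cycle meeting $[c_1,c_2]$ with reduction of degree $2$ forces ramified behaviour anchored at its intersection with the segment, and a Riemann--Hurwitz count on the space of directions at that point shows each such cycle absorbs critical data that, away from $[c_1,c_2]$, cannot be shared; turning this into ``three cycles are impossible'' requires analysing how $\mathbf f$ moves the segment $[c_1,c_2]$ relative to the segment joining its critical values, and the behaviour at the two ``critical gates'' at its ends, with a case split according to whether the critical orbits leave the segment or not.

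The step I expect to be the genuine obstacle is exactly this last one: excluding a third cycle once postcritically finite reductions are permitted. The non-archimedean reformulation and the identification of the ramification locus are bookkeeping, and Theorem~\ref{alpha} disposes of the non postcritically finite case; but controlling the postcritically finite rescaling limits means understanding the full orbit combinatorics of the finite subtree spanned by $c_1$, $c_2$ and the critical values of $\mathbf f$, and it is there that the special feature of degree $2$ — that this subtree, hence the reduced dynamics, is rigid enough to support at most two independent return cycles — must be used.
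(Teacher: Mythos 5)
This statement is a cited result: the paper attributes it to Kiwi's paper \cite{K2} and notes that it was re-proven in \cite{A2} using the trees-of-spheres framework, but it contains no proof of its own for you to be measured against. So the only thing to evaluate here is whether your sketch is sound on its own terms.

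The translation steps are fine and are indeed the standard dictionary Kiwi uses: encode $(f_n)_n$ as $\mathbf f\in\K(Z)$, identify rescalings with periodic type~II points, rescaling limits with reductions of first-return maps, dynamical dependence with membership in a common cycle, and observe that a degree-$\geq 2$ reduction forces the cycle to meet the ramification locus $\mathcal R(\mathbf f)$, which in degree $2$ and residue characteristic $0$ is the segment between the two classical critical points. The problem is the overall count structure. Theorem~\ref{alpha} bounds only the number of dynamically independent \emph{non} postcritically finite rescaling classes by $2d-2=2$; it says nothing about how many PCF classes can coexist with them. Your plan ``handle non-PCF by Theorem~\ref{alpha}, then separately show PCF cycles don't push the total above $2$'' does not yield the statement, because a priori you could have two non-PCF cycles (allowed by Theorem~\ref{alpha}) plus one PCF cycle, for a total of three. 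What is actually needed is a single global count, for all cycles whose first-return reduction has degree $\geq 2$ at once, driven by the fact that the ramification segment $[c_1,c_2]$ and its image under $\mathbf f$ form a finite tree whose combinatorics can only support two disjoint return cycles. You flag this last step as ``the genuine obstacle,'' and that is correct: it is the entire content of the theorem, and your proposal does not supply it. As written, this is an honest sketch of the reduction but not a proof.
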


Following these result, J.Kiwi wrote in \cite{K2} the following natural question:

\begin{question} Is there a bound on the number of classes of dynamically independent and non-monomial rescalings that a sequence in $\Rat_d$ can have that would depend only on the degree $d$?
\end{question}
 
In this paper we prove that the answer is no. More precisely we prove the following theorem:

\begin{theoremint}\label{TOF}
For every $n\in\N^*$ and $d\geq3$ there exists a sequence of rational maps $(f_k)_k$ in $\Rat_d$ with $n$ dynamically independent, post-critically finite and non-monomial rescaling limits.
\end{theoremint}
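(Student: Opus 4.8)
We argue through (dynamical) \tss. Recall the correspondence at the heart of this circle of ideas: if a sequence $(f_k)_k\subset\Rat_d$ degenerates to a self-map $\F\colon\T\to\T$ of a \ts realizing a degree-$d$ rational map, then its rescaling limits are exactly the first-return maps $\F^{p}_{v}$ of $\F$ at the periodic vertices $v$ of $\T$ at which this first-return map has degree $\ge 2$, with rescaling period $p$; such a limit is non-monomial, resp.\ post-critically finite, precisely when $\F^{p}_{v}$ is. Moreover two of these rescalings, at periodic vertices $v,w$, are dynamically independent if and only if $v$ and $w$ lie in distinct $\F$-orbits: a dynamical dependence forces $w$ to be an $\F$-iterate of $v$ and hence in the same cycle, while distinct cycles make the associated Moebius sequences diverge in $\Rat_1$ (in the period-$1$ case this last point is simply that a dynamical dependence with $q=1$ forces the two rescalings to be equivalent). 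Consequently the theorem reduces to two steps: (i) construct a dynamical \ts $\F\colon\T\to\T$ realizing degree $d$ that has $n$ periodic vertices lying in $n$ distinct $\F$-orbits, each carrying a non-monomial, post-critically finite rational map of degree $\ge 2$; and (ii) invoke the realization theory for dynamical \tss to obtain an honest sequence $(f_k)_k\subset\Rat_d$ degenerating to $\F$, whence the $n$ rescalings.

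\noindent\textbf{The combinatorial model.} Fix a non-monomial, post-critically finite degree-$2$ rational map $g$ whose critical set is $\{0,\infty\}$ --- for instance the basilica map $g(z)=z^{2}-1$, whose free critical point $0$ has period $2$. We build $\T$ with a distinguished \emph{spine}, a chain of vertices $v_{1},v_{2},\dots,v_{n}$ on a single edge-path, each fixed by $\F$ together with every spine edge, the marked sphere map at $v_{j}$ being a Moebius conjugate $g_{j}$ of $g$; off the spine $\F$ carries a fixed auxiliary piece of degree $d-2$. What makes this compatible with a bounded degree and arbitrary $n$ is that a \emph{single} critical point of the realized map can descend along the whole spine and be a critical point of every $g_{j}$ at once: the $2d-2$ critical points are not spent once per level. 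Concretely, the two branch points $0,\infty$ of $g$ are lifted to two critical points $\alpha,\beta$ of the realized degree-$d$ map situated below the entire spine, so that the reductions of $\alpha,\beta$ at each $v_{j}$ are the two critical points of $g_{j}$, while the local degree of $\F$ along each spine edge is $2$. This needs $d\ge 3$: the critical point(s) of a degree-$d$ map in excess of the two threaded through a degree-$2$ spine are precisely what is required to make every reduction $g_{j}$ genuinely non-monomial, and for $d=2$ there is no slack --- consistently with Theorem~\ref{beta}, which caps the number of dynamically independent rescalings in $\Rat_{2}$ at $2$. The remaining data (the conjugating maps $g_{j}$, the relative lengths of the spine edges, the attaching of the degree-$(d-2)$ piece) are fixed recursively in $j$.

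\noindent\textbf{Conclusion and main difficulty.} Granting such an $\F$, step (ii) and the per-rescaling convergence statements are the routine part; non-monomiality and post-critical finiteness of the $g_{j}$ and dynamical independence then hold by construction. Running this for every $n$ and reading the same object over the field $\mathbb{L}$ of formal Puiseux series yields a degree-$d$ rational map whose action on the Berkovich projective line has arbitrarily many type II periodic points with non-monomial reduction, which is the non-trivial Berkovich dynamics announced in the introduction. The one delicate step is the recursive construction of the spine: one must arrange that at each level $j$ the marked sphere map is \emph{exactly} $g_{j}$ --- with no degree drop and with $v_{j}$ not absorbed into an adjacent vertex of $\T$ --- while keeping the spine fixed, keeping $\alpha$ and $\beta$ critical and below the spine, keeping all spine-edge local degrees equal to $2$ and the total degree equal to $d$, and controlling any extra periodic vertices produced (additional ones only strengthen the conclusion but must be accounted for). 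I expect this compatibility analysis --- essentially the statement that the fixed degree-$2$ model $g$ can be realized simultaneously at all the scales along the spine --- to be where the real work lies.
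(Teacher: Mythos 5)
Your reduction --- realize a dynamical tree of spheres with $n$ periodic vertices carrying non-monomial post-critically finite return maps, then invoke Theorem~2 of \cite{A1} --- is the right frame, but the spine model you propose is internally inconsistent. For a dynamical system between trees of spheres we have $f_v\circ i_v=i_{F(v)}\circ F$, so a fixed vertex $v$ together with a fixed adjacent edge $e$ forces $i_v(e)\in\S_v$ to be a fixed point of $f_v$, and if that edge carries local degree $2$ then $i_v(e)$ is a \emph{fixed critical point} of $f_v$. An interior spine vertex $v_j$ (with $1<j<n$) is adjacent to the two spine edges $e_{j-1},e_j$; by your own requirement both are fixed and both have local degree $2$, so $f_{v_j}$ is a degree-$2$ map of the sphere with two distinct fixed critical points, hence Moebius-conjugate to $z\mapsto z^2$ --- monomial, contradicting what you need. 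Your threading device cannot rescue this: if $\alpha,\beta$ both lie past $v_n$, their directions seen from any $v_j$ with $j<n$ agree, so they do not give two distinct critical points of $g_j$; if instead they sit at opposite ends of the spine, then at an interior $v_j$ both directions are again the fixed spine directions and the same monomiality obstruction reappears. In short, a fixed spine with all spine edges of degree $2$ cannot carry non-monomial reductions at its interior vertices, so step (i) of your strategy fails as stated, independently of the realizability issue you flag for step (ii).

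The paper sidesteps both difficulties by never attempting to build the tree of spheres directly. It imports Cui and Peng's self-grafting construction \cite{CP}, which uses Thurston's realization theorem to produce genuine hyperbolic rational maps $f_0,f_1,\dots\in\Rat_d$ (starting from Godillon's family for $d=3$). The Shishikura tree $\T_{f_n}$ together with its induced map $\tau_{f_n}$ has, by Remark~\ref{fundrem}, $n$ pairwise disjoint cycles of period $k>1$, each with degree $>1$ and with a non-critical branch mapping onto a critical one under the return map --- which is exactly the non-monomiality condition. The degenerating sequence is then produced not by guessing a target tree but by the standard pinching/stretching deformation along a KB canonical multicurve of $f_n$ (Proposition~\ref{quasidef}); Theorem~\ref{translation} shows the limiting dynamical system between trees of spheres recovers the Shishikura tree dynamics, and \cite{A1}~Theorem~2 converts each cycle into a rescaling limit. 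The crucial structural difference from your proposal is that these are periodic vertices of period $>1$ (with the branch structure moving around the cycle), not a static fixed spine; that is what evades the obstruction above, and going through actual $\Rat_d$ maps is what makes the approximability question --- which you correctly identify as the hard part of your route --- disappear.
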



Theorem \ref{alpha} and Theorem \ref{beta} have been proven using non-archimedean dynamics tools and later re-proven in \cite{A1} and \cite{A2} respectively, using a different approach based on the Deligne-Mumford compactification of the moduli space of stable punctured spheres. This compactification has been restated with the language of trees of spheres in \cite{A3}. We suppose that the reader has a good knowledge of the definition provided in \cite{A1}. With this vocabulary, Theorem \ref{TOF} can be written the following way:

\begin{theoremint}\label{TOF2}
For every $n\in\N^*$ and $d\geq3$ there exists a dynamical system between trees of spheres of degree $d$ limit of dynamically marked rational maps, with $n$ cycles of spheres whose associated cover is post-critically finite and non-monomial.
\end{theoremint}

In fact these two approaches are related and their relation has been sketched in \cite{A5}. The results of \cite{A1} and \cite{A2} are a translation of results in \cite{K2}. The authors use dynamical systems between trees of spheres in one case or dynamics on Berkovich spaces in the other, in order to deduce results in holomorphic dynamics.
The approach here is the exact reverse. 

We are inspired by the holomorphic dynamical notion of ``Shishikura trees".
 Those have been introduced by M. Shishikura in \cite{S1} and \cite{S2} for a special case and developed as a general idea during his talks. Following these ideas, \cite{CP} define another notion of Shishikura trees that we will use for our purpose. Our result will be based on the constructions of self-graphting given in \cite{CP}.
 
Denote by $\D^\star$ the punctured unit disc of $\C$. In the analytic context,  rescalings, rescaling limits, independence of rescalings, dynamical dependance of rescaling limits  are also defined by replacing $n\in \N$ tending to infinity by $t\in\D^\star$, $n\to\infty$ by $t\to0$ in the previous definitions, and requiring holomorphic dependence on $t$ (see \cite{A5}).
Jan Kiwi proved the following.

\begin{proposition}[\cite{K2}Proposition 6.1]\label{propKiwi}
Consider a sequence of degree $d$ rational maps ${f_n}$. Let $N \in \N$ and assume that for all $j = 1,...N$ the sequence $(M_{j,n})_n$ is a rescaling of period $q_j$ for $(f_n)_n$ with rescaling limit $g_j$. Then there exists a degree $d$ holomorphic family $({f_t})_t $ and, for each $j = 1,...N$, a holomorphic family of Moebius transformations $(M_{j,t})_t $ such that $(M_{j,t})_t $ is a rescaling for $({f_t})_t $ of period $q_j$ and limit $g_j$.

If $(M_{j,t})_t$ and $(M_{k,t})_t$ are dynamically dependent for $({f_t})_t$, then $(M_{j,n})_n$ and $(M_{k,n})_n$ are dynamically dependent for $({f_n})_n$.
\end{proposition}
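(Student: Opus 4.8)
The plan is to realise the prescribed rescaling data as a single holomorphic arc inside a compactified space of configurations, and to read off the conclusions from the limiting point; the curve selection lemma is the main tool.

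Write $\overline{\Rat_D}=\P^{2D+1}$ for the usual projective compactification of $\Rat_D$ (a point being a pair of degree $\le D$ homogeneous polynomials up to a common scalar, with \emph{reduction} the associated map after cancelling the common factor), so $\overline{\Rat_1}=\P^{3}$. For each index $j$ and each $k$ dividing $q_j$, conjugation‑and‑iteration $(f,M)\mapsto M\circ f^{k}\circ M^{-1}$ is a morphism $\Rat_d\times\Rat_1\to\Rat_{d^{k}}$. Let $\Gamma$ be the Zariski closure, in the product $\overline{\Rat_d}\times\overline{\Rat_1}^{\,N}\times\prod_{j}\prod_{k\mid q_j}\overline{\Rat_{d^{k}}}$, of the graph of all these morphisms over $\Rat_d\times\Rat_1^{\,N}$. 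Then $\Gamma$ is an irreducible projective variety whose dense interior $\Gamma^{\circ}$ --- the locus where the first $1+N$ coordinates are genuine (i.e.\ lie in $\Rat_d$, resp.\ $\Rat_1$) --- is the graph itself, so on $\Gamma^{\circ}$ the remaining coordinates are the honest conjugates. I will use the elementary dictionary: a sequence in $\Rat_D$ converges uniformly on compact subsets of $\S$ off a finite set if and only if it converges in $\overline{\Rat_D}$ to a point with non-constant reduction, and then that reduction is the limit.

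First I would pass to a subsequence of $(f_n)_n$ along which the point
\[
x_n:=\bigl(f_n,\ (M_{j,n})_j,\ (M_{j,n}\circ f_n^{k}\circ M_{j,n}^{-1})_{j,\,k\mid q_j}\bigr)\in\Gamma^{\circ}
\]
converges, necessarily to some $P\in\Gamma$; a further harmless extraction (using that $q_j$ is the rescaling period) lets us assume the conjugates attached to proper divisors $k$ of $q_j$ converge to points of degree $\le1$. The coordinates of $P$ record: $f_n\to\hat f$; each $M_{j,n}\to\mu_j$, a boundary point of $\overline{\Rat_1}$ since $M_{j,n}\to\infty$ in $\Rat_1$; the $(j,q_j)$-coordinate of $P$ has reduction $g_j$, of degree $\ge2$; and the $(j,k)$-coordinates for proper divisors $k\mid q_j$ have reduction of degree $\le1$. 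Now apply the curve selection lemma in its holomorphic form (valid for complex constructible sets): there is a holomorphic $\gamma:\D\to\Gamma$ with $\gamma(0)=P$ and $\gamma(\D^{\star})\subset\Gamma^{\circ}$. Its first $1+N$ coordinates are a degree $d$ holomorphic family $(f_t)_t$ (with $f_t\in\Rat_d$ for $t\ne0$) and holomorphic families $(M_{j,t})_t$ of Moebius transformations (with $M_{j,t}\in\Rat_1$ for $t\ne0$ and $M_{j,t}\to\mu_j$ degenerate, hence $M_{j,t}\to\infty$ in $\Rat_1$). Passing the limits of the conjugate-coordinates through the dictionary shows $M_{j,t}\circ f_t^{q_j}\circ M_{j,t}^{-1}\to g_j$ off a finite set with $\deg g_j\ge2$, while the conjugates for proper divisors of $q_j$ degenerate to degree $\le1$; so $(M_{j,t})_t$ is a rescaling for $(f_t)_t$ of period exactly $q_j$ and limit $g_j$.

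For the dynamical dependence statement I would enlarge $\Gamma$ to $\Gamma'$ by further recording, for each ordered pair $(j,k)$ with a common period $q$ and each $m\in\{1,\dots,q\}$, an auxiliary Moebius transformation $L_{jkm}$ and the two conjugates $L_{jkm}^{-1}\circ f^{m}\circ M_{j}$ and $M_{j}^{-1}\circ f^{q-m}\circ L_{jkm}$; let $\pi:\Gamma'\to\Gamma$ be the projection and $\mathcal Z_{jkm}\subset\Gamma'$ the constructible (by Chevalley) set where those two conjugate-coordinates have non-constant reduction. The point I would establish is that, for any sequence or holomorphic family whose configuration point tends to a given $Q\in\Gamma$, dynamical dependence of the $j$-th and $k$-th rescalings is equivalent to $Q\in\bigcup_m\pi(\mathcal Z_{jkm})$ --- a condition depending only on $Q$, hence shared by the constructed family and by the original subsequence, which both limit to $P$. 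The ``only if'' direction is immediate: a convergent subsequence of the auxiliary maps realising dynamical dependence yields a point of some $\mathcal Z_{jkm}$ over $Q$. The ``if'' direction is the main obstacle: from a point $P'\in\mathcal Z_{jkm}$ over $P$ and from $x_n\to P$ one must produce actual Moebius maps $L_n\in\Rat_1$ along which the two conjugate sequences converge to the non-constant reductions dictated by $P'$, and the proper surjection $\pi$ does not automatically lift a sequence converging to $P$ to one converging to a prescribed point of $\pi^{-1}(P)$; one must exploit the three-dimensional freedom in choosing each $L_n$ together with the local structure of $\Gamma'$ as a modification of $\Gamma\times\overline{\Rat_1}$ along the indeterminacy locus of conjugation, to steer a subsequence of the lifted configuration into $\mathcal Z_{jkm}$. (In the vocabulary of \cite{A1,A2} this is the statement that dynamical dependence is a feature of the limiting dynamical system between trees of spheres rather than of the approximating family, and could be quoted from there.) Granting this, dynamical dependence of $(M_{j,t})_t$ and $(M_{k,t})_t$ for $(f_t)_t$ forces $P\in\bigcup_m\pi(\mathcal Z_{jkm})$, hence dynamical dependence of $(M_{j,n})_n$ and $(M_{k,n})_n$ for $(f_n)_n$.
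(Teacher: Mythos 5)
This proposition is not proved in the paper: it is Proposition~6.1 of Kiwi's paper \cite{K2}, quoted verbatim and used as a black box, so there is no ``paper's own proof'' to compare against. That said, your attempt can still be evaluated on its merits.

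The first half of your argument --- compactifying $\Rat_D$ as $\P^{2D+1}$, taking the closure $\Gamma$ of the graph of all the conjugation‑and‑iteration morphisms, passing to a subsequence so that the configuration point converges to $P$ and the conjugates for proper divisors of $q_j$ degenerate to degree $\le 1$, and then invoking the curve selection lemma to produce a holomorphic arc through $P$ with punctured part in $\Gamma^{\circ}$ --- is a sound and fairly standard way of establishing the existence of the family $(f_t,M_{j,t})$ with the prescribed rescaling limits and exact periods. Two small cautions: your ``dictionary'' should not be stated as an equivalence, since locally uniform convergence off a finite set determines the reduction of any limit point in $\overline{\Rat_D}$ but not the limit point itself (different subsequences can converge to boundary points with the same reduction but different holes); only the implication you actually use is correct. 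And the constructed $(M_{j,t})$ need not degenerate in $\overline{\Rat_1}$ for ``rescaling'' in the sense of the definition above, so the aside about $\mu_j$ being a boundary point is harmless but irrelevant here.

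The second half contains a genuine gap, which you have correctly identified yourself: your plan reduces the transfer of dynamical dependence to the claim that $P\in\bigcup_m\pi(\mathcal Z_{jkm})$ is equivalent to dynamical dependence of \emph{any} sequence whose configuration tends to $P$. The ``only if'' direction is indeed immediate, but the statement you need is exactly the ``if'' direction applied to the original $(f_n)_n$, and that does not follow from properness of $\pi$. The obstruction is concrete: $\Gamma'$ is a modification of $\Gamma\times\overline{\Rat_1}$, and given $x_n\to P$ one does \emph{not} get to choose the direction of approach; the freedom in picking $L_n\in\Rat_1$ only buys you control in the $\overline{\Rat_1}$ factor, and in general the Hausdorff limit of the slices $\overline{\{(x_n,L,\dots):L\in\Rat_1\}}\subset\pi^{-1}(x_n)$ need not exhaust $\pi^{-1}(P)$ (a ``vanishing cycle'' phenomenon). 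So from $P\in\pi(\mathcal Z_{jkm})$ one cannot in general steer $(x_n,L_n)$ into $\mathcal Z_{jkm}$. Until this lifting step is justified --- either by a real analysis of the local structure of $\Gamma'$ over $P$, or by importing the corresponding fact from \cite{A1,A2,K2} as you suggest in parentheses --- the proof of the second sentence of the proposition is incomplete.
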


Hence Theorem \ref{TOF} has the following consequence.

\begin{corollaryint}\label{resc}
For every $n\in\N^*$ and $d\geq3$ there exists a holomorphic family $f_t\in\Rat_d$ for $t\in \D^\star$ with $n$ dynamically independent, post-critically finite and non-monomial rescaling limits.
\end{corollaryint}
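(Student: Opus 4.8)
The plan is to deduce Corollary \ref{resc} formally from Theorem \ref{TOF} by transporting the sequential statement to a holomorphic family via Kiwi's Proposition \ref{propKiwi}. First I would apply Theorem \ref{TOF} to the given $n\in\N^*$ and $d\geq 3$, obtaining a sequence $(f_k)_k$ in $\Rat_d$ together with $n$ rescalings $(M_{j,k})_k$ for $j=1,\dots,n$, of respective rescaling periods $q_j$, such that the rescaling limits $g_j$ are post-critically finite and non-monomial and such that the $(M_{j,k})_k$ are pairwise dynamically independent.

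Next I would feed this entire data --- the sequence $(f_k)_k$ together with all $n$ rescalings at once --- into Proposition \ref{propKiwi} with $N=n$. This produces a single degree $d$ holomorphic family $(f_t)_t$, $t\in\D^\star$, and, for each $j$, a holomorphic family of Moebius transformations $(M_{j,t})_t$ that is a rescaling for $(f_t)_t$ of period $q_j$ with the same limit $g_j$. In particular each $g_j$ is still post-critically finite and non-monomial, since these are intrinsic properties of the map $g_j$, which is unchanged.

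Finally I would verify dynamical independence in the family. This is immediate from the second assertion of Proposition \ref{propKiwi}, read in contrapositive form: if $(M_{j,t})_t$ and $(M_{k,t})_t$ were dynamically dependent for $(f_t)_t$ for some $j\neq k$, then $(M_{j,k})_k$ and $(M_{k,k})_k$ would be dynamically dependent for $(f_k)_k$, contradicting the choice made above. Hence the $n$ holomorphic rescalings remain pairwise dynamically independent, which gives the corollary.

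I do not expect any genuine obstacle in this argument: once Theorem \ref{TOF} is in hand, the corollary is a routine transfer. The only points requiring a little care are that Proposition \ref{propKiwi} must be applied to all $n$ rescalings simultaneously, so that one and the same family $(f_t)_t$ realises all of them, and that one should keep track of the periods $q_j$, which otherwise play no role. All the substance lies in Theorem \ref{TOF} (equivalently Theorem \ref{TOF2}), established via the self-grafting constructions of \cite{CP}.
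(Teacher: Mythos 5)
Your argument is exactly the (implicit) one in the paper: the corollary is stated as an immediate consequence of Theorem \ref{TOF} via Kiwi's Proposition \ref{propKiwi}, applied to all $n$ rescalings at once, with dynamical independence transported by reading the second assertion of that proposition in contrapositive. Correct, and essentially identical to the paper's route.
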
 
Using the bridge between non-archimedean dynamics and the dynamics on trees of spheres made explicit in \cite{A5}, we deduce Corollary \ref{Berk} below.

Denote by $\K$ the completion of the field of formal Puiseux series over $\C$ equipped with its usual non-archimedean norm. Denote by $\P^1_{Berk}$ the Berkovich projective line over $\K$ and recall the the type II points of $\P^1_{Berk}$ are the points separating at least three different branches.

\begin{corollaryint}\label{Berk}
For every $n\in\N^*$ and $d\geq3$ there exists $f\in\K(Z)$ of degree $d$ with $n$ periodic type II points in disjoint cycles for the induced dynamics in $\P^1_{Berk}$ whose reduction is post-critically finite and non-monomial.
\end{corollaryint}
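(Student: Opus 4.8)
The plan is to derive Corollary \ref{Berk} from Corollary \ref{resc} by means of the dictionary between degenerating holomorphic families of rational maps over $\D^\star$ and rational maps over the field $\K$ of Puiseux series, as made explicit in \cite{A5}. Concretely, a holomorphic family $(f_t)_{t\in\D^\star}$ of degree $d$ rational maps whose coefficients are meromorphic at $t=0$ defines, after substituting the uniformizer $t$ for the Puiseux variable, an element $f\in\K(Z)$ of degree $d$; and the analytic rescaling data of $(f_t)$ translate into non-archimedean dynamical data of $f$ acting on $\P^1_{Berk}$. First I would invoke Corollary \ref{resc} to produce, for given $n\in\N^*$ and $d\geq 3$, a holomorphic family $(f_t)_t\in\Rat_d$ over $\D^\star$ with $n$ dynamically independent, post-critically finite, non-monomial rescaling limits $(g_1,\dots,g_n)$ at rescalings $(M_{1,t})_t,\dots,(M_{n,t})_t$ of respective periods $q_1,\dots,q_n$. (If the coefficients are only holomorphic on $\D^\star$ rather than meromorphic at $0$, one first passes to a family with algebraic, hence Puiseux, coefficients approximating the rescaling behaviour; in practice the family coming from \cite{CP} self-graftings is already algebraic in $t$.)

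Next I would use the correspondence of \cite{A5} to turn each rescaling $(M_{j,t})_t$ into a type II point $\xi_j\in\P^1_{Berk}$: a holomorphic (or algebraic) family of Moebius maps $M_{j,t}$ sending $0,1,\infty$ to three points whose mutual distances in the hyperbolic metric on $\S$ blow up at a controlled rate corresponds to a disc in $\K$, i.e.\ to a type II point of $\P^1_{Berk}$, and the condition that $M_{j,t}$ is a rescaling of period $q_j$ for $(f_t)_t$ says exactly that $f^{q_j}$ fixes $\xi_j$ (and $q_j$ is minimal with this property, so $\xi_j$ lies in a cycle of length $q_j$). The reduction of $f^{q_j}$ at $\xi_j$, which is the map obtained by looking at $f^{q_j}$ in the coordinate provided by $M_{j,t}$ and letting $t\to 0$, is by construction the rescaling limit $g_j$; hence the reduction is post-critically finite and non-monomial as claimed. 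The separateness of the $n$ cycles is precisely the dynamical independence of the $n$ rescalings: dynamical dependence of $(M_{j,t})_t$ and $(M_{k,t})_t$ would force $\xi_j$ and $\xi_k$ to be in the same $f$-orbit (via the intermediate map of Definition \ref{DDR} landing at a common type II point), so dynamical independence guarantees the $n$ type II cycles are pairwise disjoint. Finally I would note that a type II point with non-monomial reduction of degree $\geq 2$ separates at least three branches — so this is genuinely a non-trivial (non-simple) dynamics on $\P^1_{Berk}$.

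The main obstacle, and the step that requires the most care, is the last one: making rigorous that \emph{dynamical independence of the analytic rescalings implies disjointness of the associated Berkovich cycles}, and conversely controlling exactly what ``reduction'' means so that it coincides with the holomorphic rescaling limit $g_j$. One must check that the type II point $\xi_j$ attached to $(M_{j,t})_t$ is well-defined (independent of the choice of normalisation of $M_{j,t}$, since equivalent rescalings should give the same point), that $f^{q_j}(\xi_j)=\xi_j$ with $q_j$ minimal (this uses that $q_j$ is the rescaling period, i.e.\ the minimality clause in the Definition), and that two type II fixed points of $f^q$ in the same grand orbit produce, by composing the two ``halves'' of the cycle, exactly the pair of non-constant rational maps $g_1,g_2$ appearing in Definition \ref{DDR}; the contrapositive then gives disjoint cycles from dynamical independence. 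All of this is the content of the bridge of \cite{A5}, so the proof is essentially a careful citation, but the translation between the analytic ``hyperbolic distances blow up'' picture and the algebraic ``disc in $\K$'' picture, and between ``rescaling limit'' and ``reduction'', is where the work lies; one should spell out the correspondence on a case where it is transparent (e.g.\ $M_{j,t}=\mathrm{diag}(1,t^{a_j})$ up to precomposition, giving the Gauss-type point of radius $|t|^{a_j}$) and then appeal to functoriality.
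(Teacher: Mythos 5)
The paper's own ``proof'' of Corollary \ref{Berk} is a single sentence: it invokes the bridge of \cite{A5} \emph{between non-archimedean dynamics and the dynamics on trees of spheres}. That phrasing makes clear that the logical input is the trees-of-spheres statement, Theorem \ref{TOF2}, not the analytic-family statement Corollary \ref{resc}: the bridge of \cite{A5} realizes a dynamical system between trees of spheres (arising as a limit of marked rational maps) as a rational map over $\K$ acting on $\P^1_{Berk}$, with periodic internal vertices corresponding to periodic type II points and the sphere maps $f_v$ corresponding to reductions. This translation is combinatorial/algebraic and does not require any auxiliary analytic family at all.

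Your route is different: you go through Corollary \ref{resc}, obtain a holomorphic family $(f_t)_t$ over $\D^\star$, and then try to ``substitute the uniformizer for the Puiseux variable'' to land in $\K(Z)$. This step has a real gap that you correctly notice but do not close: the family produced in Proposition \ref{quasidef} is built by a quasiconformal stretching deformation, so its coefficients are a priori only holomorphic on $\D^\star$ (in fact on $\C_r$); nothing in the construction forces them to be meromorphic, let alone algebraic or Puiseux, at the degenerate parameter. Your parenthetical claim that ``in practice the family coming from \cite{CP} self-graftings is already algebraic in $t$'' is not established anywhere in the paper and is not a consequence of Thurston realization plus stretching. Without that, there is no element of $\K(Z)$ to speak of. You could repair this either by routing through \cite{K2}'s own algebraization (Kiwi's non-archimedean constructions live natively over $\K$), or, more simply, by following the paper and applying \cite{A5} directly to the tree-of-spheres dynamical system of Theorem \ref{TOF2}, which sidesteps the issue entirely. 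The remaining translation points you carefully list (equivalent rescalings give the same type II point, independence gives disjoint cycles, rescaling limit equals reduction) are indeed the content of the bridge in \cite{A5} and are not in dispute; the sticking point is that the bridge you want to cross (analytic families $\leftrightarrow$ Berkovich) is not the one the paper crosses, and the former requires an algebraicity hypothesis you have not verified.
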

 
 We will not recall more details about the non-archimedean dynamics that the interested reader can find in \cite{K2} (or \cite{A5} for a first reading).
 

\bigskip
\noindent{\textbf{Outline.}}

In Section \ref{chap2}, we recall the notion of Shishikura trees from \cite{CP} with some small adaptations. Section \ref{selfgraftingue} deals with the self-grafting construction. In Section \ref{chap3}, we explicit the relation between Shishikura trees and dynamical systems between trees of spheres. In Section \ref{sectionend}, we prove Theorem \ref{TOF} and Theorem \ref{resc}. 
The article ends on a little appendix with some technical results on stable trees.


\bigskip
\noindent{\textbf{Acknowledgments.}} The authors want to thank Tan Lei for presenting the one to the other and for inviting us to discuss in Angers in LAREMA. 


\section{Construction of a Shishikura tree}\label{chap2}

In this section we recall the constructions and results from \cite{CP} with a few changes on some definitions. We point out some properties needed in the next sections. 

\subsection{Canonical multicurve}

Here $f$ denote a hyperbolic rational map.
We denote by ${\cal P}_f$ its post-critical set and ${\cal P}'_f$ the set of accumulation points of ${\cal P}_f$.

Suppose that $E\subset\S$ is a connected set which is neither open or closed. We say that $E$ is {\bf disc-type} if $E$ is contained in a disk $D$ with $\card D\cap {\cal P}_f\leq1$. We say that $E$ {\bf annular-type} if $E$ is not disk-type and is contained in an annulus $A$ with $\card A\cap {\cal P}_f=0$. If $E$ is neither disk-type nor annular-type, then we say that $E$ is {\bf complex-type}.

Recall that given a set $X\subset\S$, a {\bf multicurve} on $\S\setminus X$ is a collection of Jordan curves $\gamma$ in $\S\setminus X$ pairwise disjoint and non isotopic and non-peripheral (i.e. every connected component of $\S\setminus\gamma$ contains at least two points of ${\cal P}_f$). A multicurve $\Gamma_f$ is {\bf totally stable}  (by $f$) if each non-peripheral curve of $f^{-1}(\gamma)$ for $\gamma\in\Gamma_f$ is isotopic rel ${\cal P}_f$ to a curve in $\Gamma_f$ and  if each curve $\gamma\in\Gamma_f$ is isotopic rel ${\cal P}_f$ to a curve in $f^{-1}(\gamma')$ for some curve $\gamma'\in \Gamma_f$.

 \begin{theorem}
 There exist a totally stable multicurve $\Gamma_f$ in $\S\setminus {\cal P_f}$ such that $\forall D\in\S\setminus \Gamma_f$, D contains a unique complex type Julia or is contained in a complex-type Fatou domain, and conversely.
\end{theorem}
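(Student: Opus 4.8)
The plan is to produce $\Gamma_f$ essentially by hand, as a tight system of separating curves around the finitely many ``complex-type'' objects attached to $f$, and then to check total stability and the claimed bijection with complementary regions by a covering-space argument; hyperbolicity is what keeps everything finite. Concretely: since $f$ is hyperbolic, the Julia set $J_f$ is a hyperbolic set, $\mathcal P_f$ accumulates only on the finitely many attracting cycles (so $\mathcal P'_f$ is finite), and every component of the Fatou set $F_f$ is preperiodic with only finitely many grand orbits. Hence there are only finitely many complex-type Fatou domains and only finitely many isotopy types of curves that can bound a tight neighbourhood of a complex-type piece of $J_f$. First I would fix a finite list of the objects to be separated: the complex-type Fatou domains, and the complex-type pieces of $J_f$.

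\emph{Construction of $\Gamma_f$.} For each complex-type Fatou domain $U$ take a curve $\gamma_U$ in an outer collar of $\partial U$, and for each complex-type piece $K$ of $J_f$ a curve $\gamma_K$ tightly enclosing $K$. Each such curve is non-peripheral: being a boundary curve of a region that contains a set which is neither disc-type nor annular-type, neither of its two complementary discs can meet $\mathcal P_f$ in at most one point (otherwise the enclosed object would itself be disc-type). Choosing the curves compatibly — nested when the corresponding objects are nested, disjoint otherwise — and applying a small isotopy makes them pairwise disjoint and pairwise non-isotopic; the resulting collection is $\Gamma_f$. By construction each complementary region $D$ of $\S\setminus\Gamma_f$ contains exactly one complex-type piece of $J_f$, or is contained in a complex-type Fatou domain, and each complex-type object lies in exactly one such $D$. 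The two points needing care here are that $D$ never contains two complex-type objects — which holds because any two of them are separated by one of the chosen curves — and that $D$ is never ``trivial'': since the curves are non-peripheral and pairwise non-isotopic, no complementary region can be a disc meeting $\mathcal P_f$ in at most one point, nor an annulus disjoint from $\mathcal P_f$.

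\emph{Total stability and the main obstacle.} Since $f$ is a branched covering with $f(F_f)=F_f$, $f(J_f)=J_f$ and all critical values in $\mathcal P_f$, every component of $f^{-1}(\gamma)$ for $\gamma\in\Gamma_f$ bounds an outer collar of an $f$-preimage of the corresponding complex-type object. By Riemann--Hurwitz together with the location of the critical values, such a preimage is again of complex-type, disc-type, or annular-type; in the first case the bounding curve is isotopic rel $\mathcal P_f$ to a curve of $\Gamma_f$ by the bijection established above, and in the latter two cases the bounding curve is peripheral, hence not a curve of any multicurve. Therefore every non-peripheral component of $f^{-1}(\gamma)$ is isotopic into $\Gamma_f$; conversely, surjectivity of $f$ and the same analysis show that each $\gamma\in\Gamma_f$ is isotopic rel $\mathcal P_f$ to a non-peripheral component of $f^{-1}(\gamma')$ for some $\gamma'\in\Gamma_f$. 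Hence $\Gamma_f$ is totally stable. The delicate step — and the heart of the argument — is precisely the claim that $f$-preimages of complex-type objects never produce a genuinely new complex-type object requiring a curve not already in $\Gamma_f$ (equivalently, that the naive pull-back procedure $\Gamma\mapsto f^{-1}(\Gamma)$ stabilises without enlarging the isotopy content); establishing this forces one to combine the precise definitions of disc-type (a disc meeting $\mathcal P_f$ in at most one point) and annular-type (an annulus disjoint from $\mathcal P_f$) with hyperbolicity (critical values in $\mathcal P_f$, $\mathcal P'_f$ finite) and Riemann--Hurwitz to control the topology of preimages. By comparison, the covering-space bookkeeping around it is routine.
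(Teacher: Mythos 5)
The paper does not actually prove this theorem: Section~2 is explicitly a recollection of constructions and results from the in-preparation reference \cite{CP}, and this statement is cited from there without an argument. So there is no in-paper proof to compare against; what follows assesses your sketch on its own.

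Two genuine gaps stand out. First, the finiteness that makes $\Gamma_f$ a \emph{finite} multicurve is precisely what cannot be taken for granted. A hyperbolic map with disconnected Julia set has in general infinitely many Julia components, and the fact that only finitely many of them are complex-type is a substantial theorem (essentially the point of \cite{CP}), not a corollary of ``${\cal P}'_f$ is finite'' or of ``finitely many grand orbits of Fatou components.'' Your phrase ``only finitely many isotopy types of curves that can bound a tight neighbourhood'' is a strictly weaker assertion and does not produce a finite curve system with one complementary region per complex-type object as the theorem requires. Second, you identify the total-stability pull-back step as ``the heart of the argument'' and then leave it as a list of ingredients (the disc/annular/complex trichotomy, hyperbolicity, Riemann--Hurwitz) rather than actually carrying it out. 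It is not a priori excluded that a preimage of a complex-type Julia component or Fatou domain is a fresh complex-type object whose enclosing non-peripheral curve fails to be isotopic rel ${\cal P}_f$ into $\Gamma_f$; ruling this out is exactly the content of the analysis you defer. There is also a lighter issue with the construction step itself: making the initially chosen curves pairwise disjoint, pairwise non-isotopic, and still in bijection with complex-type objects after ``a small isotopy'' is not automatic when the complex-type pieces accumulate in complicated ways, and this interacts with the finiteness issue above. As written, the sketch points in a plausible direction, but the two central steps are missing rather than routine.
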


 These properties are stable under isotopy rel ${\cal P}_f$ and the isotopy class of such a curve is called the {\bf canonical multicurve} of $f$. As a direct consequence of these definitions we have the following property:

\begin{lemma}\label{stability}
Let $\Gamma_f$ be a representative of the canonical multicurve and  $U$ be a connected component of $\S\setminus \Gamma_f$. Denote by $cc\partial U$ the collection of connected components of $\S\setminus\partial U$. Then we have:
\begin{itemize}
\item $\forall V\in cc\partial U, \card V\cap {\cal P}_f\geq 1$ and
\item $\card (cc\partial U)+\card (U\cap {\cal P}_f) \geq 3.$
\end{itemize}
\end{lemma}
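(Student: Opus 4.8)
The plan is to derive Lemma \ref{stability} directly from the characterization of the canonical multicurve $\Gamma_f$ as a totally stable multicurve whose complementary pieces each carry a single complex-type object (a complex-type Julia component, or a complex-type Fatou domain). The key point is that ``complex-type'' is exactly the condition that prevents a set from sitting in a disk meeting ${\cal P}_f$ in at most one point, or in a ${\cal P}_f$-free annulus; so translating ``complex-type'' into combinatorial data about the number of complementary components and post-critical points is the whole content of the statement.

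First I would fix a representative $\Gamma_f$ and a component $U$ of $\S\setminus\Gamma_f$, and analyze $\partial U$: it is a sub-collection of the curves of $\Gamma_f$, and $cc\partial U$ is the set of connected components of $\S\setminus\partial U$, one of which contains $U$. For the first bullet, suppose some $V\in cc\partial U$ satisfied $\card V\cap {\cal P}_f=0$; then the curve of $\partial U\subset\Gamma_f$ bounding $V$ on that side would be peripheral (one complementary component containing no post-critical point), contradicting that $\Gamma_f$ is a multicurve (curves are required non-peripheral). Actually I need to be a little careful: $V$ is a component of $\S\setminus\partial U$, not of $\S\setminus\gamma$ for a single $\gamma$; but $V$ is bounded by a single curve $\gamma\in\partial U$ (the innermost one on that side), and the component of $\S\setminus\gamma$ on the $V$ side contains $V$, hence also contains no more than... here I should instead argue that if $V$ contained no post-critical point then $V$ together with the curves inside it could be absorbed, contradicting that $\Gamma_f$ realizes the canonical decomposition minimally — or more cleanly, use that every curve of $\Gamma_f$ is non-peripheral and that $V$ being ${\cal P}_f$-free forces peripherality of its boundary curve. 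I would phrase it via: each $V\in cc\partial U$ is a union of components of $\S\setminus\Gamma_f$ together with some curves, and if $\card V\cap{\cal P}_f=0$ then $\overline V$ is a disk or the ${\cal P}_f$-free side of the boundary curve makes that curve non-essential.

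For the second bullet, the dichotomy is: $U$ contains a unique complex-type Julia component, or $U$ is contained in a complex-type Fatou domain (so $U$ itself, being the Fatou domain minus a ${\cal P}_f$-free collar, is complex-type). In either case $U$ is complex-type, i.e. $U$ is \emph{not} disk-type and \emph{not} annular-type. Not disk-type means: $U$ is not contained in a disk $D$ with $\card D\cap{\cal P}_f\le 1$; since $U$ is a component of $\S\setminus\Gamma_f$ and $\Gamma_f$ is a multicurve, if $cc\partial U$ had $\le 2$ elements then $U$ would be contained in a disk (if $1$) or an annulus (if $2$), and then failing disk-type forces $\card U\cap{\cal P}_f\ge\ldots$. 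Precisely: if $\card(cc\partial U)=1$, $U$ sits in a disk $D$ bounded by its single boundary curve; not-disk-type gives $\card D\cap{\cal P}_f\ge 2$, and combined with the first bullet applied to the outside one gets $\card U\cap{\cal P}_f\ge 2$, so the sum is $\ge 3$. If $\card(cc\partial U)=2$, $U$ sits in an annulus $A$ with the two boundary curves; not-annular-type forces $\card A\cap{\cal P}_f\ge 1$, but $A$'s post-critical points that are not in $U$ lie outside $A$... so $\card U\cap{\cal P}_f=\card A\cap{\cal P}_f\ge 1$, giving sum $\ge 3$. If $\card(cc\partial U)\ge 3$, the sum is already $\ge 3$ by the first bullet (each of the $\ge 3$ components contributes $\ge 1$, though these are not $U\cap{\cal P}_f$ — so here the inequality is immediate from $\card(cc\partial U)\ge 3$ alone). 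Assembling the three cases finishes it.

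The main obstacle I anticipate is the bookkeeping in the annular case: one must be careful that ``$U$ annular-type'' is about $U$ being contained in \emph{some} annulus with no post-critical points, not necessarily the annulus cut out by $\partial U$, and that the canonical-multicurve theorem's dichotomy ($U$ carries a complex-type object) really does upgrade to ``$U$ is complex-type as a subset of $\S$.'' For the Fatou case this needs the observation that removing a ${\cal P}_f$-free collar neighborhood of the boundary from a complex-type Fatou domain leaves a complex-type set (a disk or ${\cal P}_f$-free annulus containing $U$ would, after adding back the collar, contain the Fatou domain, contradiction). I would isolate that observation as a short preliminary remark before doing the case analysis, so that the proof of the lemma reduces cleanly to the combinatorics of $cc\partial U$ together with the non-peripherality of the curves of $\Gamma_f$.
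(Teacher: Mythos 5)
The paper offers no explicit proof of Lemma~\ref{stability} at all (it is introduced only as a ``direct consequence of these definitions''), and the unpacking you give is precisely the intended one: non-peripherality for the first bullet, and a case split on the number of boundary curves of $U$ using the complex-type dichotomy for the second.

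Two remarks on the gaps you yourself flag. The hesitation in your first bullet is unnecessary: since every curve of $\partial U\subset\Gamma_f$ lies on the boundary of the single region $U$, each component $V\in cc\partial U$ other than $U$ is exactly the complementary disc of a single $\gamma\in\partial U$, so non-peripherality of $\gamma$ gives $\card\,V\cap\mathcal P_f\ge 2$ (strictly stronger than the stated $\ge1$) with no further argument. The worry you raise in the second bullet, on the other hand, is a genuine subtlety. The dichotomy in the canonical-multicurve theorem says $U$ is \emph{contained in} a complex-type Fatou domain $\Omega$, not that $U$ is itself complex-type, and your reduction needs $U$ to be complex-type. Your proposed collar observation is the right fix, but notice it silently imports a feature of the construction from \cite{CP} that this paper never re-states: the KB curves of $\Gamma_f$ inside $\Omega$ are placed one per end of $\Omega$ and far enough toward the ends that the annular collars they cut off are disjoint from $\mathcal P_f$. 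Granting that, $\Omega\cap\mathcal P_f=U\cap\mathcal P_f$ and $\card\,\partial U$ equals the number of ends of $\Omega$, and complex-type for $\Omega$ immediately yields the required count in each of your three cases (disc with $\ge2$ postcritical points; annulus with $\ge1$; three or more ends). So your sketch is correct and correctly isolates the one step that cannot be read off the definitions as given here; it just relies on background about the KB canonical multicurve that the paper takes for granted.
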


Given a rational map $f$, Jordan curve is called a KB curve if it is a connected component of some Koenig or Böttcher coordinates level curve (cf \cite{DynInOne} for the definitions of the Koenigs and Böttcher coordinates). Similarly, a multicurve is KB if it consists of KB curves. By construction in \cite{CP}, we have:

\begin{proposition}\label{canncurve}
Every canonical multicurve has a representative consisting of a KB multicurve.
\end{proposition}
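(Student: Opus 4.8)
The statement is essentially a reading of the construction of the canonical multicurve carried out in \cite{CP}, so the plan is to recall that construction and keep track of which curves it produces. First I would record the key invariance property of KB curves under the dynamics: in a Koenigs coordinate $\phi$ near an attracting periodic point the corresponding return map of $f$ is $z\mapsto\lambda z$, and in a B\"ottcher coordinate it is $z\mapsto z^k$, and both of these send the round circle $\{|z|=c\}$ to a round circle. Consequently $f$ carries a KB level curve to a KB level curve, and every non-peripheral connected component of the preimage under $f$ of a KB curve is again a KB curve; in other words the class of KB multicurves is stable under exactly the operations used to test total stability.

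Next I would recall that for a hyperbolic map the Fatou set is the union of the basins of finitely many attracting cycles, and that \cite{CP} produces a representative of the canonical multicurve by choosing, in each immediate basin component, a level curve of the associated Koenigs or B\"ottcher coordinate at a suitable level, and then saturating this finite collection under taking non-peripheral preimages until it stabilizes. The saturation terminates because the complex-type Fatou domains and the complex-type Julia continua --- which are precisely the pieces never eventually engulfed by these level curves --- form a finite collection, and Lemma \ref{stability} bounds the combinatorics of the complementary pieces. By the invariance step every curve appearing in this process is a KB curve; hence the multicurve output by the construction is a KB multicurve, and since the canonical multicurve is by definition the isotopy class rel ${\cal P}_f$ of that output, it has a KB representative.

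The step I expect to be the main obstacle is the simultaneous bookkeeping in the second paragraph: one must check that the levels of the finitely many initial curves, together with all their pullbacks, can be chosen once and for all so that the resulting KB curves are pairwise disjoint, pairwise non-isotopic and non-peripheral in $\S\setminus{\cal P}_f$, and so that the decomposition of $\S$ they induce coincides with the complex-type/annular-type decomposition of the theorem recalled above. This is exactly the combinatorial heart of \cite{CP} --- which connecting annulus borders which complex-type piece, and how the pullbacks of a level curve nest inside a basin --- and I would invoke it rather than redo it here.
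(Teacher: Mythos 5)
Your proposal matches the paper's treatment: the paper offers no independent proof, stating only ``By construction in \cite{CP}'' before the proposition, so the content is precisely the observation you make that the \cite{CP} construction begins with Koenigs/B\"ottcher level curves and saturates by pullback, operations under which the class of KB curves is closed. Your added remarks on the dynamical invariance of KB curves are a correct and useful gloss on that citation, and deferring the nesting/disjointness bookkeeping to \cite{CP} is exactly what the paper does as well.
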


Such a representative is called a {\bf KB canonical multicurve} of $f$.


\subsection{Shishikura trees}

In this section, $\Gamma_f$ denotes a KB canonical multicurve of $f$ (cf Proposition \ref{canncurve}). 

Given a collection $\Gamma$ of disjoint Jordan curves on $\S$, we define the dual tree $\T_\Gamma$ of $\Gamma$ to be the tree whose vertices consist of the connected components of $\S\setminus \Gamma$ and whose edges are the elements of $\Gamma$ and join two vertices if and only if it lies in both of their boundary.

In \cite{CP}, the {\bf Shishikura tree} $\T_f$ is defined to be the dual tree of $\Gamma_f$. 
Let $\Gamma^{-1}_f:=f^{-1}(\Gamma_f)$ and $\T^{-1}_f$ be its associated dual tree. Define ${\cal P}^{-1}_f:=f^{-1}({\cal P}_f)$. The map $f$ induces a tree map $\tau_f:\T^{-1}_f\to\T_f$ (indeed, it maps adjacent vertices to adjacent vertices and edges to edges, cf Figure \ref{Fig0}).

\begin{remark} This map is slightly different than the one in \cite{CP} where the authors consider $\Gamma_1$ the subset of $\Gamma^{-1}_f$ consisting of its non peripheral elements in $\S\setminus{\cal P}_f$ to define a tree $\T_1$ and then a map $\tau_1:\T_1\to\T_f$. There is a natural map $\pi_1:\T^{-1}_f\to\T_1$ consisting in the inclusion. The relation between $\tau_f$ and $\tau_1$ is provided by the following commutative diagram:
$$\xymatrix{
  \T^{-1}_f \ar[rr]^{\pi_1} \ar[rd]_{\tau_f}  && \T_1 \ar[ld]^{\tau_1} \\
   &\T_f& 
  }$$
  and this change of definition, necessary here, does not affect the results cited from \cite{CP}.
\end{remark}

With our definition, the map $f$ induces also a degree function on the set of edges and vertices that we will denote by $\deg_f$. 
From the total stability of $\Gamma_f$ we deduce a natural identification of $\T_f$ in $\T_f^{-1}$ and the map $\tau_f$ induces dynamics on $\T_f$.



\section{Self-grafting}\label{selfgraftingue}

In \cite{CP} the authors constructed a sequence of rational maps $(f_n)_n$ using {\bf self-grafting}. We recall here what this procedure allowed to construct.

In  \cite{God}, Sébastien Godillon considered the following family of rational maps
$$f_0(z):=\frac{(1-\lambda)[(1-4\lambda+6\lambda^2-\lambda^3)z-2\lambda^3]} {(z-1)^2[(1-\lambda-\lambda^2)z-2\lambda^2(1-\lambda)]}$$
for $\lambda\neq0,1$. We fix some parameter $\lambda_0\neq0,1$ and denote by $f_0$ the corresponding element in this family.
Figure \ref{Fig0} shows the Shishikura trees $\T_{f_0}^{-1}$ and $\T_{f_0}$. 
In order to simplify the notations let us respectively denote $\T_{f_0}$ and $\tau_{f_0}$ by $\T_0$ and $\tau_0$.

 \begin{figure}
  \centerline{\includegraphics[width=11cm]{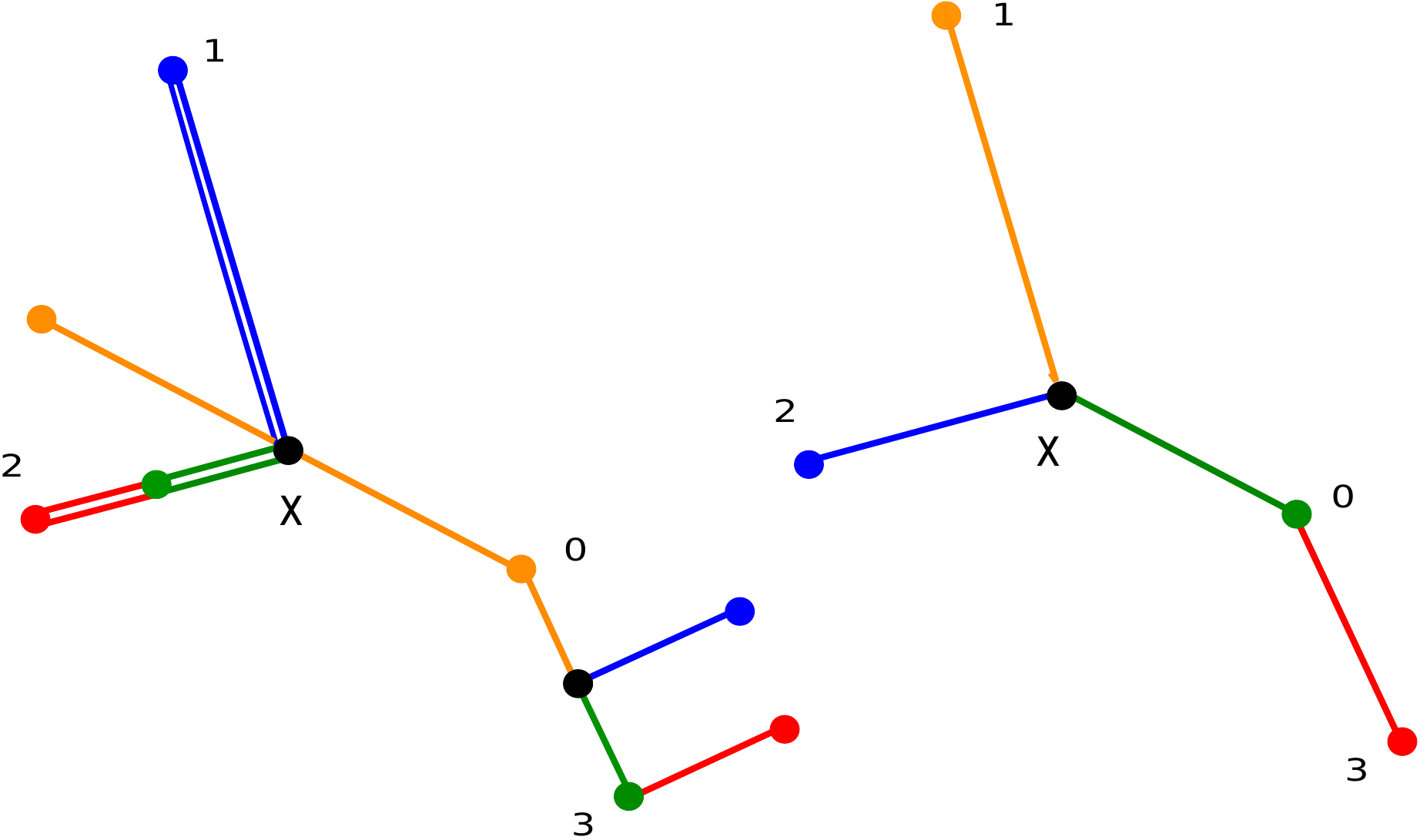}} 
   \caption{The Shishikura trees $\T_{f_0}^{-1}$ (   left) and $\T_{f_0}$ (right) for the function $f_0$ introduced in Section \ref{selfgraftingue}. The map $\tau_f$ maps the vertices and edges to the ones of identical colors. The vertex $x$ is fixed and the numbered vertices form a cycle of period $4$.}
\label{Fig0} \end{figure}

The self-grafting is an induction procedure that from some Shishikura tree $\T_{n}$ uses Thurston's realization theorem to construct 
\begin{itemize}
\item a new tree $\T_{{n+1}}$,
\item a map $\tau_{n+1}$ acting on it, and
\item a hyperbolic rational map ${f_{n+1}}$ such that $\T_{{n+1}}=\T_{f_{n+1}}$ and $\tau_{n+1}=\tau_{f_{n+1}}$.  
\end{itemize}

In addition we have $\T_{n}\lhd\T_{{n+1}}$ in the sense of the following definition.
\begin{definition}\label{defcompatib}
We say that a tree $T^X$ is compatible with a tree $T^Y$ and write $T^X\lhd T^Y$ if 
\begin{itemize}
\item  the vertices of $T^X$ are vertices of $T^Y$ and
\item for all vertices $v$, $v_1$, $v_2$ and $v_3$ of $T^X$, the vertex $v$ separates $v_1$, $v_2$ and $v_3$ in $T^X$ if and only if it does the same in $T^Y$. 
\end{itemize}
\end{definition}
(This definition can be seen as the combinatorial version of the notion of ``subtree" for the embedded real trees.)

Hence edges of $\T_{n}$ between two vertices can be identified as the arc of vertices and edges separating the two corresponding vertices in $\T_{{n+1}}$  (cf Figure \ref{Fig1}).
As a consequence $\T_{0}\lhd\T_{{n}}$. 

Let us recall here some relations between the tree $\T_{{n+1}}$ and $\T_{{n}}$.
We denote by $x_0$ and $x_1$ the two vertices of the $\T_0$ represented on Figure \ref{Fig1}. As $\T_{0}\lhd\T_{{n}}$, the vertices $x_0$ and $x_1$ are also vertices of $\T_n$.  We will denote by $[x_0,x_1]_n$ the arc in $\T_n$ joining those.
The vertices of $\T_{{n+1}}$ lying in the edges of $\T_{{n}}$ form a cycle of period $k>2$. 
Let $v_0\in\T_{{n+1}}$ be the vertex on $[x_0,x_1]_{n+1}\setminus\{x_0\}$ which is the closest to $x_0$.
Denote by $B_0$ the branch of $\T_{{n+1}}$ at $v_0$ that contains the vertex $x_0$ (by the choice of $v_0$, this branch contains only vertices of $\T_{{n}}$). The tree $\T_{n+1}$ is obtained from its elements identified with $\T_n$ by attaching at each point $v_{i+1}:=\tau_{{n+1}}^i(v_i)$ for $i=0..k-1$ a copy of $B_0$ that we denote by $B_{i+1}$. 

Denote by $\iota_{n+1}:\T_{n+1}\to \T_{n+1}$ the involution that exchanges the two branches $B_0$ and $B_k$ via their natural identification. We extend the map $\tau_n$ on $\T_{n+1}$ into a map $f_{n+1}$ that maps $B_k$ to $B_{1}$ and $B_i$ to $B_{i+1}$ for $i=1..k-1$ via the natural identification. The map $\tau_{{n+1}}$ is defined to be $\iota_{n+1}\circ f_{n+1}$.
The degree of $\tau_{{n+1}}$ is the one of $\tau_{0}$ on its elements identified with $\T_{0}$ and it is $1$ elsewhere.

\begin{remark}\label{fundrem}
 With these properties we can point out in particular that 
\begin{itemize}
\item $v_0\in[x_0,x_1]_{n+1}$ is $k$ periodic for the map $\tau_{n+1}$ with some $k>1$,
\item $\tau^k_{{n+1}}$ maps $B_k$ with degree $1$ to $B_0$, and
\item $\tau_{{n+1}}$ maps $B_0$ with degree $2$ to its image.
\end{itemize}
\end{remark}

 \begin{figure}
  \centerline{\includegraphics[width=13cm]{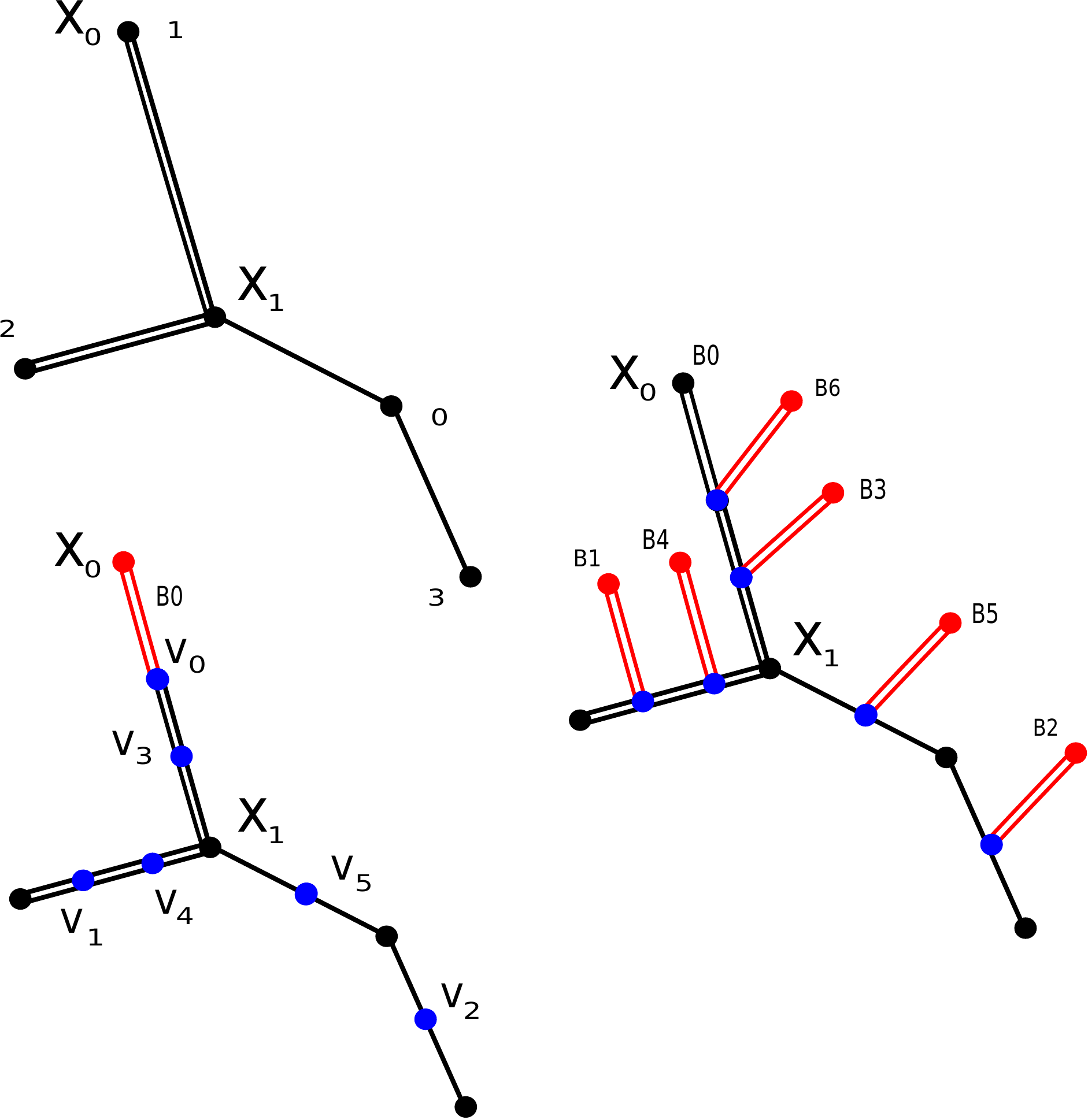}} 
   \caption{On the top left is the tree $\T_{f_0}$ with its cycle as on Figure \ref{Fig0}. The tree lower left is the tree $\T_{f_1}$ restricted to its elements identified with $\T_{f_0}$ and in red is represented the branch $B_0$ as labeled in Section \ref{selfgraftingue}. The $v_i$ form a cycle. On the right is the tree $\T_{f_1}$ and in red are the copies of the branch $B_0$ from the previous tree.}
\label{Fig1} \end{figure}


\section{Jumping between trees}\label{chap3}

In this section $f$ denotes a hyperbolic rational map. We consider a Shishikura tree $\T_f$  and denote by $\tau_f:\T_f^{-1}\to\T_f$ the associated Shishikura tree map. We explain how we associate to $\tau_f$ a dynamical system between trees of spheres which is a limit of dynamically marked rational maps in the sense of \cite{A1}.

\subsection{Notation for trees of spheres}\label{notationTOF}

A tree of spheres marked by a finite set $X$ is usually denoted by $\T^X$. It is a combinatorial tree that we denote by $T^X$ whose set of leaves is $X$ together with the data for each internal vertex $v\in T^X$ of a conformal sphere denoted by $\S_v$ and a different attaching point $i_v(e)\in\S_v$ for every edge $e$ adjacent to $v$. We denote by $X_v$ the set of attaching points of edges on $\S_v$.

If $\T^Y$ is another tree of spheres, we write $\T^X\lhd\T^Y$ if $T^X\lhd T^Y$ and if the identified edges are attached at the same place.

A cover ${\cal F}:{\cal T}^Y\to {\cal T}^Z$ between two trees of spheres marked by finite sets $Y$ and $Z$ is the following data
\begin{itemize}
\item a map $F:T^Y\to T^Z$ mapping leaves to leaves, internal vertices to internal vertices, and edges to edges, 
\item for each internal vertex $v$ of $T^Y$, an holomorphic ramified cover ${f_v:{\S}_v\to {\S}_{F(v)}}$ that satisfies the following properties: 
\begin{itemize}
\item the restriction $f_v : {\S}_v-Y_v\to {\S}_{F(v)}-Z_{F(v)}$ is a cover, 
\item $f_v\circ i_v = i_{F(v)}\circ F$,
\item if $e$ is an edge between $v$ and $v'$, then the local degree of $f_v$ at $i_v(e)$ is the same as the local degree of $f_{v'}$ at $i_{v'}(e)$. 
\end{itemize}
\end{itemize}

For such a $\F$, we say that $(\F,\T^X)$ is dynamical system between trees of spheres if $\T^X\lhd\T^Y$ and $\T^X\lhd\T^Z$.

\subsection{Construction of a tree of spheres}

\subsubsection{Marking sets}

It follows from Lemma \ref{stability} that one can associate to every vertex $v$ of $\T_f$ a set $X_v\subset {\cal P }_f$ of cardinal $3$ such that two points of $X_v$ lie in the same connected component $U$ of $\S\setminus \Gamma_f$ if and only if $U=v$.
Define $X$ to be the union (maybe not disjoint) of the $X_v$ and the set of critical values of $f$. Let $Z:=X\cup f(X)$ and let $Y:=f^{-1}(Z)$. Note that by choice we have $X\subset Y\cap Z$.

We define ${\bf F}$ to be the pair consisting of $F:=(f|_Y:Y\to Z)$ and the degree function $\deg_F:Y\to\N$ corresponding to the local degree of ${f|_Y}$. This ${\cal F}$ is a portrait in the sense of \cite{A1}.

Denote by $i$ and $j$ the respective inclusions of $Y$ and $Z$ into $\S$.
The map $(f,i,j)$ is a rational map dynamically marked by $({\bf F}, X)$ in the sense of \cite{A1}, i.e. $y$ and $z$ are injective and we have the following commutative diagram : 
$$\xymatrix{
    X\ar[r]\ar[rd]&Y \ar[r]^i \ar[d]_{ F}  & \S \ar[d]^{f} \\
    &Z \ar[r]_j & \S
  }$$
 with $\deg_f({i(y)}) = \deg_F(y)$ for all $y\in Y$ and $i|_X=j|_X$.


\subsubsection{Stretching}

Recall that given a set $X'\subset\S$, any disjoint union of curves ${\Gamma'\subset \S\setminus X'}$ gives a natural partition of $X'$ that corresponds to the set of non empty intersections of $X'$ with the different connected components of $\S\setminus \Gamma'$, we denote it by ${\cal P}_{\Gamma',X'}$.

For every $\gamma\in\Gamma$, let us fix an open annulus $A^{\gamma}$ in $\S\setminus X$ that retracts to $\gamma$ and made of KB level curves. 
Let $$\C_r:=\{c\in\C\;|\;\Re(c)>0\}.$$

\begin{proposition} \label{quasidef}
There exists a holomorphic family of rational maps $f_t$ for $t\in\C_r$ and a holomorphic motion 
$$\Phi = \left|
    \begin{array}{ll}
        \C_r\times\S\to&\S \\
        (t,z)\longmapsto &\Phi_t(z)
    \end{array}
\right.
\quad
\text{such that}\quad 
\xymatrix{
    \S\ar[r]^f\ar[d]_{\Phi_t}&\S\ar[d]^{\Phi_t}  \\
   \S \ar[r]_{f_t} & \S,
  }$$
\begin{itemize}
\item ${\cal P}_{\Phi_t(\Gamma),\Phi_t(X)}={\cal P}_{\Gamma,X}$, and
\item for every $\gamma\in\Gamma_f$, $\rm{Modulus} (\Phi_t(A^\gamma)\to\infty$ when $t\to0$.
\end{itemize}
\end{proposition}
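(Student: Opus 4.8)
The plan is to build the family $(f_t)_t$ and the holomorphic motion $\Phi$ directly by a quasiconformal surgery that inserts increasingly long cylinders along each $\gamma\in\Gamma_f$. First I would use the fact that $\Gamma_f$ is a KB multicurve (Proposition \ref{canncurve}): the chosen annuli $A^\gamma\subset\S\setminus X$ are made of level curves of Koenigs or B\"ottcher coordinates of $f$, so on each $A^\gamma$ there is a holomorphic coordinate $w_\gamma$ conjugating $f$ to a linear map (in the Koenigs case $w\mapsto\mu w$) or a power map (in the B\"ottcher case $w\mapsto w^p$), at least on the part of $A^\gamma$ whose image also lies in an annulus of the family. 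This gives an explicit model of the dynamics near each curve in terms of standard annuli $\{1<|w|<R_\gamma\}$ together with an explicit holomorphic action, which is what makes a controlled stretching possible.

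Next I would define, for each $t\in\C_r$, a Beltrami coefficient $\mu_t$ supported on $\bigcup_{\gamma}f^{-n}(A^\gamma)$ for all $n\ge 0$ (equivalently: the grand orbit of the annuli) by stretching the annular coordinate $w_\gamma$ by the real part of $t$ in the $\log$-coordinate, i.e. replacing $\log w_\gamma$ by $\log w_\gamma + (\Re t)\,\mathrm{Re}(\log w_\gamma)$ or, more cleanly, by pulling back a fixed stretch of a reference annulus. The key point is to make this choice \emph{dynamically compatible}: one defines $\mu_t$ on a fundamental annulus of each cycle of $\Gamma_f$ under $f$ and then spreads it by the dynamics $f$ (using the holomorphic conjugacies above so that the stretch is $f$-invariant on the grand orbit, and is genuinely holomorphic — in fact affine — in the parameter $\log t$). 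Since the $A^\gamma$ are disjoint and disjoint from $X$, the supports stay away from the marked points, the critical points and the post-critical set; in particular $\|\mu_t\|_\infty<1$ with a bound depending only on $\Re t$. By the Measurable Riemann Mapping Theorem with holomorphic parameter dependence, there is a holomorphic family of quasiconformal maps $\Phi_t:\S\to\S$ solving $\bar\partial\Phi_t=\mu_t\,\partial\Phi_t$, normalized at three fixed points of $X$; then $f_t:=\Phi_t\circ f\circ\Phi_t^{-1}$ is holomorphic — because $\mu_t$ was chosen $f$-invariant on the relevant grand orbit — hence rational of degree $d$, and $(t,z)\mapsto\Phi_t(z)$ is a holomorphic motion. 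The compatibility of partitions ${\cal P}_{\Phi_t(\Gamma),\Phi_t(X)}={\cal P}_{\Gamma,X}$ is immediate since $\Phi_t$ is a homeomorphism fixing the combinatorics of $\Gamma\cup X$, and $\mathrm{Modulus}(\Phi_t(A^\gamma))\to\infty$ as $t\to 0$ by the explicit logarithmic stretch (choosing the parametrization so that $t\to 0$ corresponds to $\Re t\to 0^+$ after the substitution, or renaming so the modulus is $\sim (\Re t)^{-1}$).

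The main obstacle I expect is the dynamical compatibility of $\mu_t$: one must stretch along \emph{all} preimages of the $A^\gamma$ coherently so that $\Phi_t$ conjugates $f$ to something holomorphic, and simultaneously keep the family holomorphic in $t$ and the stretch unbounded as $t\to 0$. This requires being careful that the grand orbit of $\bigcup_\gamma A^\gamma$ meets no critical point (which is why one works with a hyperbolic $f$ and the canonical multicurve, whose complementary pieces control the post-critical set by Lemma \ref{stability}), and that on each curve of a cycle the inserted cylinder length is distributed consistently around the cycle — one prescribes it on a single representative per cycle and transports it by iteration, using the B\"ottcher/Koenigs linearizing coordinates to guarantee the transported Beltrami form is still the pullback of an affine stretch and hence depends holomorphically (indeed affinely) on $\log t$. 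Once that bookkeeping is set up, the conclusion follows from the standard parametrized MRMT exactly as in Thurston-type surgeries.
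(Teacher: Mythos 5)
Your proposal is correct and takes essentially the same approach as the paper: the paper's proof simply invokes the standard ``stretching'' deformation from Branner--Fagella \cite{Nuria} — choosing a fundamental annulus in each relevant grand orbit, stretching the complex structure there, and letting the moduli blow up as $t\to 0$ — and you have spelled out the underlying construction (an $f$-invariant Beltrami coefficient spread from fundamental annuli and the parametrized Measurable Riemann Mapping Theorem) that the paper outsources to the citation.
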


\begin{proof}
This construction uses the standard "stretching" deformation described for example in \cite{Nuria}. For our purpose, we choose a fundamental annulus in each of the grand orbit of the Fatou components that contain one of the KB curve. By stretching the complex structure on these annuli by making them going to infinity in modulus when $t$ approaches $0$, we get such a family of rational maps $f_t$ and a holomorphic motion: 
$$\Phi = \left|
    \begin{array}{ll}
       \C_r\times\S\to&\S \\
        (t,z)\longmapsto &\Phi_t(z)
    \end{array}
\right.
$$
such that 
$$\xymatrix{
    \S\ar[r]^f\ar[d]_{\Phi_t}&\S\ar[d]^{\Phi_t}  \\
   \S \ar[r]_{f_t} & \S.
  }$$
 
We choose for every $\gamma\in\Gamma_f$, an open annulus $A^\gamma$ included in the Fatou set of $f$ and that contain $\gamma$. By the stretching deformation gives that $\Phi_t(A^\gamma)$ is still an annulus whose modulus tends to infinity as $t\to0$. Moreover, the $f_t$ are holomorphic so the postcritical set moves holomorphicaly with the parameter $t$ so the sumption about the partitions follows.

\end{proof}

 We define such a $\Phi$ and take a sequence $t_n\in\C_r$ converging to $0$. Then according to \cite{A3}(Theorem 3), after passing to a subsequence, we have the following:
 
 \begin{proposition}\label{compactcov}
 The sequence $(f_{t_n},\Phi_{t_n}\circ i,\Phi_{t_n}\circ j)$ converges dynamically to a dynamical system between trees of spheres $(\F:\T^Y\to\T^Z,\T^X)$ marked by ${\bf F}$.
 \end{proposition}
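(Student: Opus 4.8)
The plan is to apply the compactness result of \cite{A3} (Theorem 3) to the specific sequence at hand, and the main work is to check that its hypotheses are satisfied by our construction. First I would recall what that theorem requires: one starts with a portrait $\mathbf F$, a dynamically marked rational map $(f,i,j)$ realizing it, and a sequence of dynamically marked rational maps $(f_{t_n},i_{t_n},j_{t_n})$ all marked by the same portrait $\mathbf F$, with the markings $i_{t_n},j_{t_n}$ leaving every compact set of the relevant configuration space (equivalently, some of the marked points collide in the limit). The conclusion is that, after passing to a subsequence, the markings converge to stable trees of spheres $\T^Y$ and $\T^Z$, and the maps $f_{t_n}$ converge to a cover $\F:\T^Y\to\T^Z$ which, together with the limit $\T^X$ of the common sub-configuration $X$, forms a dynamical system between trees of spheres marked by $\mathbf F$.

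Next I would set up the three inputs. The portrait $\mathbf F=(F,\deg_F)$ with $F=f|_Y:Y\to Z$ was defined just above, and $(f,i,j)$ was observed to be a rational map dynamically marked by $(\mathbf F,X)$ via the commutative diagram displayed there; this handles the "base point". For the sequence, I would take $(f_{t_n},\Phi_{t_n}\circ i,\Phi_{t_n}\circ j)$: since $\Phi_{t_n}$ is a holomorphic motion conjugating $f$ to $f_{t_n}$ (Proposition \ref{quasidef}), each of these triples is again dynamically marked by $\mathbf F$ — the conjugacy diagram for $\Phi_{t_n}$ together with the diagram for $(f,i,j)$ composes to the required diagram for $(f_{t_n},\Phi_{t_n}\circ i,\Phi_{t_n}\circ j)$, and local degrees are preserved because $\Phi_{t_n}$ is a homeomorphism. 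So the only nontrivial hypothesis left is the divergence of the markings.

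The key step is therefore to verify that the configurations $\Phi_{t_n}\circ i$ and $\Phi_{t_n}\circ j$ leave every compact subset of configuration space as $n\to\infty$, so that the limit trees are genuinely nontrivial (have more than one vertex) and the theorem produces an honest tree-of-spheres dynamical system rather than a single sphere. This is where the second bullet of Proposition \ref{quasidef} is used: for each $\gamma\in\Gamma_f$ the annulus $\Phi_{t_n}(A^\gamma)$ has modulus tending to $\infty$, and by the first bullet this annulus separates the points of $\Phi_{t_n}(X)$ (and hence of $\Phi_{t_n}(Y)$, $\Phi_{t_n}(Z)$, since $X\subset Y\cap Z$) in the same pattern as $\gamma$ separates $X$ — in particular, since $\Gamma_f$ is a multicurve each complementary component carries at least two marked points, the degenerating annulus pinches between two groups of at least two points, which is exactly the condition forcing the configuration to escape to the boundary of the Deligne--Mumford / configuration-space compactification. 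Once this is in place the cited theorem applies verbatim, giving the subsequential dynamical convergence to $(\F:\T^Y\to\T^Z,\T^X)$ marked by $\mathbf F$.

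The main obstacle I anticipate is purely bookkeeping rather than conceptual: making precise that "$(f_{t_n},\Phi_{t_n}\circ i,\Phi_{t_n}\circ j)$ is dynamically marked by $\mathbf F$" in the exact formalism of \cite{A1} (injectivity of the markings, matching of the degree functions, compatibility $X\subset Y\cap Z$ surviving under $\Phi_{t_n}$), and checking that the hypotheses of \cite{A3} Theorem 3 are phrased in terms of exactly the data we are feeding it — degenerating moduli of a fixed finite collection of annuli attached to the multicurve. None of these steps requires new ideas; they amount to composing the commutative diagram of $(f,i,j)$ with the conjugacy diagram of the holomorphic motion and invoking the two bullet points of Proposition \ref{quasidef}. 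Hence the proof is essentially a one-line citation once the setup is matched, and I would write it as such, spelling out only the dynamical-marking diagram and the role of the blowing-up moduli.
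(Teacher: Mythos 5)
Your proposal matches the paper's approach: the paper itself gives no separate proof, simply stating that the result follows from \cite{A3}~(Theorem~3) after passing to a subsequence, and you correctly identify this citation as the whole content and fill in the verification that the triples $(f_{t_n},\Phi_{t_n}\circ i,\Phi_{t_n}\circ j)$ are dynamically marked by~$\mathbf F$. The only slight overstatement is treating the escape of markings to the boundary of configuration space as a \emph{hypothesis} of the cited compactness theorem; in the Deligne--Mumford framework the subsequential limit exists regardless, and the degeneration of moduli you extract from Proposition~\ref{quasidef} is what guarantees the limit tree has more than one internal vertex (which matters in Section~\ref{sectionend}, not in the bare statement of Proposition~\ref{compactcov}).
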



\subsection{From Shishikura trees to trees of spheres}

Define $T_f$ to be the combinatorial tree of $\T_f$ with the additional vertices $X$ and edges between any element $x\in X$ and the vertex corresponding to the connected component of $\S\setminus \Gamma_f$ containing $x$. Similarly, from $\T_f^{-1}$ we construct $T_f^{-1}$ whose set of leaves is $f^{-1}(X)$. It is clear from the definitions that we have $\T_f\lhd T_f$ and $\T^{-1}_f\lhd T^{-1}_f$.

In this section we prove the following statement:

\begin{theorem}\label{translation}
After changing the labeling of the internal vertices of $T_f$ we have $T_f= T^X$ and ${T_f^{-1}  \lhd T^Y}$. The map $F:T^Y\to T^Z$ restricted to $T_f$ is the map $\tau_f$ and for every attaching point $z$ of an edge $e\in T^X$ on a sphere associated to a vertex $v\in T^X$, the local degree of $ f|_v$ at $z$ is the degree of $\tau_f$ at $e$.
\end{theorem}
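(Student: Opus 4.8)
The strategy is to match up the combinatorial data on both sides, starting from the marking sets. Recall from the construction that $X$ consists of three points $X_v\subset {\cal P}_f$ attached to each vertex $v$ of $\T_f$ (one in each ``direction" determined by $v$ in the sense of Lemma~\ref{stability}), together with the critical values. First I would verify that the combinatorial tree $T_f$, obtained from $\T_f$ by hanging the leaf $x\in X$ off the connected component of $\S\setminus\Gamma_f$ containing it, is exactly the abstract tree $T^X$ underlying the limit $\T^X$ of Proposition~\ref{compactcov}. The point is that $\T^X$ is the combinatorial tree whose internal vertices record the stable partitions of $X$ produced by the curves $\Phi_{t_n}(\gamma)$ of growing modulus; by the first bullet of Proposition~\ref{quasidef} this partition is exactly ${\cal P}_{\Gamma,X}$, which by the choice of the $X_v$ is precisely the vertex/adjacency structure of $T_f$. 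Since the leaves $X$ of $\T^X$ are labeled, no relabeling is needed there, but the internal vertices of $\T^X$ come with no a priori names, whence the phrase ``after changing the labeling of the internal vertices of $T_f$". One then checks that the attaching points of edges on each sphere $\S_v$ of $\T^X$ are the directions in $\S_v$ determined by the limits of the three points of $X_v$ together with the other curves, which matches the edges of $T_f$ at $v$; this gives $T_f=T^X$.

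Next I would treat $T_f^{-1}\lhd T^Y$. By the same reasoning applied to $Y=f^{-1}(Z)$ and the multicurve $\Gamma_f^{-1}=f^{-1}(\Gamma_f)$ (whose components retract onto the annuli $\Phi_{t_n}(A^\gamma)$ pulled back by $f$, still of growing modulus since $f_{t_n}$ is conjugate to $f$ via $\Phi_{t_n}$), the curves of $\Gamma_f^{-1}$ survive in the limit $\T^Y$, so the dual tree $\T_f^{-1}$ of $\Gamma_f^{-1}$, enriched with the leaves $f^{-1}(X)\subset Y$, embeds combinatorially into $T^Y$: its vertices are vertices of $T^Y$ and separation is preserved, i.e.\ $T_f^{-1}\lhd T^Y$. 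Here we only get $\lhd$ and not equality because $\Gamma_f^{-1}$ need not be all of the curves detected in the limit — $Y$ can contain extra points (preimages of critical values lying in annular regions) forcing $T^Y$ to have more vertices than $\T_f^{-1}$ — but that is harmless for $\lhd$.

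With the two identifications in hand, I would show that the cover map $F:T^Y\to T^Z$, restricted to the copy of $T_f$ sitting inside $T^Y$ (via $T_f\lhd T_f^{-1}\lhd T^Y$, using the natural identification of $\T_f$ in $\T_f^{-1}$ coming from total stability), agrees with $\tau_f$. This follows from the commutative diagram $\Phi_{t_n}\circ f=f_{t_n}\circ \Phi_{t_n}$ together with the definition of dynamical convergence: $F$ on vertices is the limit of the combinatorial action of $f_{t_n}$ on the partitions, and $f_{t_n}$ moves the curve $\Phi_{t_n}(\gamma')$ in $f^{-1}(\Gamma_f)$ to $\Phi_{t_n}(f(\gamma'))$, so on dual trees it induces exactly $\tau_f$ — this is just the functoriality recorded in the diagram before the remark following the definition of $\tau_f$. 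Finally, for the local degrees: the local degree of $f_v$ at the attaching point $i_v(e)$ of an edge $e$ is, by the compatibility condition in the definition of a cover and by how degrees pass to the limit (see \cite{A3}), the local multiplicity with which $f_{t_n}$ maps a small neighborhood of the corresponding curve; since $f_{t_n}$ is conjugate to $f$ by the holomorphic motion $\Phi_{t_n}$, this multiplicity equals $\deg_f$ of the curve of $\Gamma_f^{-1}$ representing $e$, which is by definition $\deg_f(e)=\deg_{\tau_f}(e)$. Assembling these yields the statement.

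\textbf{Main obstacle.} The delicate point is the bookkeeping in the second and third steps: one must carefully track how the identification $\T_f\hookrightarrow\T_f^{-1}$ from total stability interacts with the embeddings $T_f=T^X\lhd T^Y$ and $T_f^{-1}\lhd T^Y$, and check that ``$F$ restricted to $T_f$" is unambiguously defined and really lands where $\tau_f$ sends it — in particular that no vertex of $T_f$ gets absorbed into an edge of $T^Z$ under $F$, and that the leaves $f^{-1}(X)$ are routed correctly since $X$ is not assumed disjoint from $f(X)$. The degree-matching at the end is then essentially a local computation, routine once the right points have been identified, but it hinges on knowing that the stretching deformation and the passage to the limit in \cite{A3} preserve local degrees at attaching points, which I would cite rather than reprove.
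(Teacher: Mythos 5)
Your outline for the first half (showing $T_f = T^X$ and $T_f^{-1}\lhd T^Y$) is essentially the same as the paper's: both reduce the matter to matching partitions of the marking set induced by the multicurve, using Proposition~\ref{quasidef} and the stretching so that curves of $\Gamma_f$ survive in the limit, and then invoke stability to conclude (the paper formalizes this with Lemma~\ref{compartition} and Lemma~\ref{corostabpart}, using explicit projective charts $M_{t_n}$ normalized at three marked points; your appeal to ``stable partitions" is the same idea in less explicit form).

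The genuine gap is in your third paragraph. You assert that $F$, restricted to the copy of $T_f^{-1}$ inside $T^Y$, equals $\tau_f$ ``by functoriality" — the reasoning being that $\Phi_{t_n}\circ f = f_{t_n}\circ\Phi_{t_n}$ means $f_{t_n}$ moves $\Phi_{t_n}(\gamma')$ to $\Phi_{t_n}(f(\gamma'))$. This is true at the level of curves on each finite sphere, but it does not by itself show anything about the limit tree map $F$: the tree $T^Y$ has in general strictly more internal vertices than $T_f^{-1}$, an edge $e_{-1}$ of $T_f^{-1}$ is a whole \emph{arc} in $T^Y$, and it is not automatic that $F$ sends a vertex of the subtree $T_f^{-1}$ back into the subtree $T_f$, nor that it collapses that arc onto an arc corresponding to a single edge of $T_f$. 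This is precisely what the paper's Lemma~\ref{lemsansnom} establishes (proved in Appendix~\ref{apstbtrees} via Lemma~\ref{lemmafunny}, using the fact that critical internal vertices of a cover of trees of spheres form paths joining critical leaves), and then the paper pins down \emph{which} edge is hit by comparing the argument-principle count of Lemma~\ref{prinargmult} for $f$ on $\S$ with its tree-of-spheres analogue Lemma~\ref{princarg}. You flag this as ``the delicate point" in your Main obstacle paragraph, but flagging it does not resolve it: without a substitute for Lemma~\ref{lemsansnom}, the sentence ``so on dual trees it induces exactly $\tau_f$" is precisely the claim to be proved, not a derivation of it. The same issue affects your final degree computation, which you explain only by ``conjugacy preserves multiplicity"; the paper instead reads the degree off the same two argument-principle formulas, which is what makes the passage to the limit rigorous. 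To close the gap, you would need to prove (or cite and verify the hypotheses of) the compatibility-of-covers lemma for the inclusion $T_f^{-1}\lhd T^Y$, and then use the counting formulas to identify the image edge and its degree; the conjugacy alone is not enough.
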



\subsubsection{Stable trees and compatibility}
 
 A tree is {\bf stable} if any internal vertex is adjacent to at least three edges.
 
 \begin{remark}\label{stabShishikura}
As a direct consequence of Lemma \ref{stability}, the trees $T_f$ and $T_f^{-1}$ are stable. Also according to  \cite{A3}, the combinatorial trees $T^X,T^Y$ and $T^Z$ associated respectively to $\T^X,\T^Y$ and $\T^Z$ of Proposition \ref{compactcov} are stable.
\end{remark}

In this subsection we prove the following lemma.

\begin{lemma}\label{corostabpart}\label{lemtech}
If $T_1$ and $T_2$ are two stable trees sharing the same set of leaves and such that $T_1\lhd T_2$, then $T_1= T_2$.
\end{lemma}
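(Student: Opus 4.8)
The plan is to argue by contradiction on the total number of vertices. Suppose $T_1\lhd T_2$ with both stable and sharing the leaf set $X$, but $T_1\neq T_2$. Since the vertices of $T_1$ are among those of $T_2$, there must be an internal vertex $w$ of $T_2$ that is not a vertex of $T_1$. First I would extract the key consequence of compatibility at such a $w$: for any internal vertex $v$ of $T_1$, the vertex $w$ cannot separate three vertices of $T_1$, because otherwise the separation relation in $T_2$ would fail to be reproduced in $T_1$ (there being no vertex of $T_1$ doing the separating job of $w$). Hence all of $T_1$, when viewed inside $T_2$, lies in the closure of a single branch of $T_2$ at $w$, i.e. removing $w$ from $T_2$ leaves all vertices of $T_1$ — in particular all leaves of $T_1$, hence all leaves $X$ of $T_2$ — in one connected component.

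Next I would derive the contradiction from stability of $T_2$. Since $w$ is internal in the stable tree $T_2$, it has at least three adjacent edges, hence at least three distinct branches at $w$. By the previous paragraph all leaves of $T_2$ sit in one of these branches, call it $B$. But then any branch at $w$ other than $B$ is a subtree of $T_2$ containing no leaf of $T_2$; walking outward in such a branch we reach a vertex all of whose neighbors but one lie outside the leaf set — more carefully, a finite tree with at least one vertex and no leaves in the prescribed leaf set must contain an internal vertex of $T_2$ of valence $1$ in that branch, or else every vertex of that branch has all its "leaf descendants" empty, which forces, at the extremity of the branch, a vertex adjacent to only one edge. That vertex is either a leaf of $T_2$ — impossible, as it is not in $X$ and $T_2$'s leaves are exactly $X$ — or an internal vertex of valence $1$, contradicting stability of $T_2$. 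Either way we reach a contradiction, so no such $w$ exists and the vertex sets of $T_1$ and $T_2$ coincide.

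Finally, once the vertex sets agree, I would check that $T_1$ and $T_2$ are literally the same tree. Two trees on the same vertex set are determined by their separation relation (the midpoint/median structure recovers adjacency: $u$ and $u'$ are adjacent iff no third vertex separates them), and by hypothesis the separation relations of $T_1$ and $T_2$ agree on all triples of vertices of $T_1$, which is now all vertices. Hence the edge sets coincide and $T_1=T_2$.

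The main obstacle I anticipate is the combinatorial bookkeeping in the middle paragraph: making precise the claim that a nonempty finite subtree branch of $T_2$ containing none of the leaves $X$ must produce a vertex violating stability. The cleanest route is probably to pick, among all internal vertices of $T_2$ lying strictly outside the leaf-containing branch $B$, one that is extremal — adjacent to only one other internal vertex within that region — and observe that since none of its other neighbors can be leaves (they are not in $X$) while $T_2$ has leaf set exactly $X$, it ends up with valence $<3$ unless we are careful; this is exactly where one uses that $X$ is the full leaf set and that the appendix's technical lemmas on stable trees apply. I expect this to be the step that needs the appendix referenced at the end of the introduction.
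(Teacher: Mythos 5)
Your plan is correct and genuinely works, but it is a contradiction argument where the paper proves the inclusion directly. The paper observes that any internal vertex $v_2$ of the stable tree $T_2$ separates three leaves; since those leaves are also leaves of $T_1$, the unique vertex $v_1$ of $T_1$ separating them in $T_1$ also separates them in $T_2$ by compatibility, and uniqueness of the separating vertex forces $v_1=v_2$. Thus every vertex of $T_2$ is already a vertex of $T_1$, the vertex sets coincide, and $T_1=T_2$. Your version instead posits a stray $w\in T_2\setminus T_1$, shows $w$ cannot separate three leaves, concludes that all leaves are concentrated in a single branch of $T_2$ at $w$, and then produces a leafless branch whose extremity violates stability. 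Both arguments ultimately rest on the same two facts (stability of $T_2$ and uniqueness of medians), and your dead-end-branch step is essentially a re-derivation of the standard fact that every internal vertex of a stable tree separates three leaves, which the paper invokes directly; so the paper's route is shorter but not deeper.

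One imprecision in your first paragraph deserves a fix. You write that $w$ cannot separate three vertices of $T_1$ because ``there [is] no vertex of $T_1$ doing the separating job of $w$.'' That is not the right mechanism: there is \emph{always} a vertex $v$ of $T_1$ separating any given triple of vertices of $T_1$ in $T_1$ (the median), so nonexistence is false. The correct argument is that, by compatibility, this $v$ also separates the same triple in $T_2$; since the separating vertex of a triple in a tree is unique, $v=w$, contradicting $w\notin T_1$. In other words you should invoke forced coincidence with $w$, not absence of a separator. With that correction your argument is complete, and the appendix on stable trees is not actually needed here --- the stability you need is just ``no internal vertex of valence $1$ or $2$,'' which you already use.
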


For any choice of three different leaves in a tree, there exists a unique vertex of this tree separation them.
Given an internal vertex $v$ of a combinatorial tree $T$ and a subset $X_0$ of leaves, there is a natural partition of $X_0$ associated to $v$ consisting of the collections of non empty intersections of $X_0$ with the different connected components of $T\setminus\{v\}$.

\begin{lemma}\label{compartition}
Let $T_1$ and $T_2$ be combinatorial stable trees. Denote by $X_1$ the set of leaves of $T_1$. Suppose that $X_1$ is included in the set of leaves of $T_2$ and that for every $x_1, x_2,x_3\in X_1$ distinct, the partitions of $X_1$ associated to the vertices separating $x_1, x_2,x_3$ are the same.
Then, after relabeling the internal vertices of $T_1$ or $T_2$, we have $T_1\lhd T_2$.
\end{lemma}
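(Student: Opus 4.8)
The plan is to build an explicit relabeling of internal vertices of $T_1$ by internal vertices of $T_2$ that witnesses $T_1 \lhd T_2$, using the fact that in a stable tree every internal vertex is determined by its partition of the leaves. First I would note the following key fact about stable trees: if $T$ is a stable tree with leaf set $X$ and $v$ is an internal vertex, then there exist three distinct leaves $x_1, x_2, x_3 \in X$ that are separated by $v$ (i.e.\ $v$ is the unique vertex on all three arcs $[x_i,x_j]$). Indeed, since $v$ has at least three adjacent edges, each of the $\geq 3$ branches at $v$ contains at least one leaf of $X$; picking one leaf in three distinct branches gives such a triple. Moreover $v$ is recovered from these leaves as the unique meeting vertex of the three arcs, and conversely the partition of $X$ associated to $v$ is exactly the partition induced by the branches at $v$, so \emph{two internal vertices of a stable tree coincide iff they induce the same partition of the leaf set}.

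Next I would use the hypothesis. Given an internal vertex $v$ of $T_1$, choose leaves $x_1,x_2,x_3 \in X_1$ separated by $v$ in $T_1$ as above. Let $w$ be the vertex of $T_2$ separating $x_1,x_2,x_3$ (it is internal since it separates three leaves, and unique). The hypothesis says the partition of $X_1$ induced by $w$ equals the partition of $X_1$ induced by $v$; in particular this partition does not depend on the chosen triple, only on $v$, so the assignment $v \mapsto w =: \psi(v)$ is well-defined. I would then check $\psi$ is injective: if $\psi(v) = \psi(v')$ then $v$ and $v'$ induce the same partition of $X_1$, and since $T_1$ is stable this forces $v = v'$ by the recovery fact above (applied within $T_1$, whose leaf set is exactly $X_1$). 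Relabel each internal vertex $v$ of $T_1$ by $\psi(v)$, and identify each leaf $x \in X_1$ with the corresponding leaf of $T_2$; the leaves of $T_2$ not in $X_1$ keep their labels. After this relabeling, the vertices of $T_1$ are vertices of $T_2$.

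It remains to verify the separation condition in Definition \ref{defcompatib}: for vertices $v, v_1, v_2, v_3$ of $T_1$ (now viewed inside $T_2$), $v$ separates $v_1,v_2,v_3$ in $T_1$ iff it does in $T_2$. The strategy is to reduce to leaves. Each $v_i$ is either a leaf in $X_1$ or an internal vertex of $T_1$; in the latter case replace $v_i$ by a leaf $\ell_i \in X_1$ lying in the branch of $T_1$ at $v$ containing $v_i$ (or, if $v_i = v$, handle that degenerate case directly). One checks that $v$ separates $v_1,v_2,v_3$ in $T_1$ iff $v$ separates the corresponding leaves $\ell_1,\ell_2,\ell_3$ in $T_1$, and similarly in $T_2$ after noting that the relabeling places $v$ on the arc $[\ell_i,\ell_j]$ in $T_2$ exactly when the $\ell$'s lie in distinct branches of $w=\psi(v)$, which by construction of $\psi$ matches the branch structure of $v$ in $T_1$. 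So both sides reduce to the statement ``$\ell_1,\ell_2,\ell_3$ lie in three distinct blocks of the partition associated to $v$,'' which is the same partition of $X_1$ whether computed in $T_1$ or (via $w$) in $T_2$, by hypothesis. This gives $T_1 \lhd T_2$.

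The main obstacle I anticipate is the bookkeeping in the last step: carefully formulating ``lies in the branch at $v$ containing $v_i$'' when some $v_i$ equals $v$ or when internal vertices of $T_1$ collapse onto arcs of $T_2$, and confirming that arcs between internal vertices in $T_1$ correspond correctly to arcs in $T_2$ under $\psi$. This is exactly the kind of arc/branch compatibility argument that Lemma \ref{corostabpart} (the case $X_1$ equal to the full leaf set of $T_2$) is designed to streamline, so I would expect the clean write-up to isolate a small sublemma saying ``$v$ lies on $[a,b]$ in $T_1$ iff $\psi(v)$ lies on $[a,b]$ in $T_2$ for $a,b \in X_1$'' and then deduce everything from it.
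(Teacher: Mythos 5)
Your proposal is correct and follows essentially the same route as the paper: pick a triple of leaves in $X_1$ separated by an internal vertex of $T_1$, identify it with the vertex of $T_2$ separating the same triple, argue from the partition hypothesis that this relabeling is consistent, and then verify the separation condition of Definition \ref{defcompatib} by choosing leaves in corresponding branches. The only difference is that you spell out the well-definedness and injectivity of the relabeling, which the paper compresses into the phrase ``this relabeling can be made consistently''; this is a worthwhile clarification but not a different argument.
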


\begin{proof}Take an internal vertex $u$ of $T_1$. Then, by stability, this vertex separate three distinct elements $x_1, x_2,x_3$ of $X_1$. Consider the vertex $v$ of $T^2$ separating the same elements. Then, the vertices are considered modulo relabeling, we can suppose that $u=v$. According to these hypothesis, this relabeling can be made consistently on for all the triples of $X_1$, so for all the vertices of $T_1$. It follows that we can consider that the vertices of $T_1$ are vertices of $T_2$.

Consider four vertices $u,u_1,u_2,u_3$ of $T_1$. Suppose that $u$ separates $u_1,u_2$ and $u_3$ in $T_1$. Chose an element $x_1$ (resp. $x_2,x_3$) in the branch on $v$ containing $v_1$ (resp. $v_2,v_3$). Then, in the tree $T_2$, $v$ is on the branch on $v_1$ (resp. $v_2$, then $v_3$) containing $x_2, x_3$ (resp. $v_1,v_3$, then $v_1,v_2$). It follows that $v$ separate $v_1,v_2,v_3$ in $T_2$. The same argument prove the converse property.
\end{proof}

\begin{proof}[Lemma \ref{lemtech}] As $T_1\lhd T_2$, the set of vertices of $T_1$ is included in the one of $T_2$. Any internal vertex $v_2$ of $T_2$ separates three leaves so, because these leaves are also leaves of $T_1$, there is a unique vertex $v_1$ of $T_1$ separating them and it follows that $v_1=v_2$. Hence $T_1$ and $T_2$ have the same set of vertices. From this we deduce that $T_2\lhd T_1$ and it is easy to check that $T_1= T_2$.
\end{proof}


\subsubsection{Proof of $T_f= T^X$ and ${T_f^{-1}  \lhd T^Y}$}

\begin{lemma}\label{equalTXTF}
Up to relabeling the internal vertices of $T^X$, we have ${T_f= T^X.}$ 
\end{lemma}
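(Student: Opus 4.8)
The goal is to identify $T_f$ with the combinatorial tree $T^X$ underlying the limiting tree of spheres $\T^X$ from Proposition \ref{compactcov}, up to relabeling internal vertices. Both are stable trees: $T_f$ by Remark \ref{stabShishikura} (via Lemma \ref{stability}), and $T^X$ by Remark \ref{stabShishikura} again (citing \cite{A3}). Moreover both are trees marked by the \emph{same} leaf set $X$ — indeed $T_f$ was built precisely by adjoining the leaves $X$ to $\T_f$, and $\T^X$ is marked by $X$ by construction of the dynamical convergence. So by Lemma \ref{lemtech}, it suffices to establish that $T_f \lhd T^X$ after a suitable relabeling of internal vertices; equality is then automatic.

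\textbf{Key steps.} First I would invoke Lemma \ref{compartition}: to get $T_f \lhd T^X$ (up to relabeling) it is enough to show that for every triple of distinct leaves $x_1,x_2,x_3 \in X$, the partition of $X$ induced by the vertex of $T_f$ separating them coincides with the partition of $X$ induced by the vertex of $T^X$ separating them. Second, I would unwind what each of these partitions means concretely. In $T_f$, the vertex separating $x_1,x_2,x_3$ is a connected component $U$ of $\S\setminus\Gamma_f$, and the partition of $X$ it induces records, for each $x \in X$, which branch of $\T_f$ at $U$ the point $x$ lies in — equivalently, which connected component of $\S\setminus\partial U$ contains $x$. This is exactly the data of the partition ${\cal P}_{\Gamma_f, X}$ refined at $U$. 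Third, on the side of $T^X$: by the definition of dynamical convergence in \cite{A3} applied to the stretched family $(f_{t_n}, \Phi_{t_n}\circ i, \Phi_{t_n}\circ j)$ of Proposition \ref{compactcov}, the combinatorics of $\T^X$ — in particular the partition of $X$ at each internal vertex — is governed by the limiting configuration of the marked points, which by the first bullet of Proposition \ref{quasidef} satisfies ${\cal P}_{\Phi_{t_n}(\Gamma), \Phi_{t_n}(X)} = {\cal P}_{\Gamma, X}$, while the second bullet forces the moduli of the annuli $\Phi_{t_n}(A^\gamma)$ to blow up, so that each curve $\gamma \in \Gamma_f$ genuinely separates in the limit (and no other separation is created, by the technical appendix on stable trees). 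Putting these together: the vertex partitions of $X$ in $T^X$ are precisely the refinements of ${\cal P}_{\Gamma_f,X}$, which matches $T_f$.

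\textbf{Main obstacle.} The delicate point is the third step — carefully matching the combinatorial data of the limit tree of spheres $\T^X$ with the multicurve partition. One must know that \emph{exactly} the curves of $\Gamma_f$ pinch (no more: no spurious extra vertices appear because some other annulus happened to degenerate, and no fewer: each $\gamma$ really does degenerate because $\mathrm{Mod}(\Phi_{t_n}(A^\gamma)) \to \infty$), and that the resulting limit configuration assigns to each internal vertex the partition of $X$ induced by the corresponding component of $\S\setminus\Gamma_f$. This is where the stability of $T^X$ and the appendix results on stable trees do the bookkeeping: stability rules out degenerate vertices, and Lemma \ref{lemtech} upgrades the compatibility $T_f \lhd T^X$ to genuine equality once we know the leaf sets agree. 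Everything else is a matter of carefully quoting \cite{A3} and applying Lemma \ref{compartition} and Lemma \ref{lemtech}.
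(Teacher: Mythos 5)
Your proposal follows essentially the same route as the paper: reduce to showing $T_f\lhd T^X$ after relabeling via Lemma \ref{compartition}, then invoke Lemma \ref{corostabpart} to upgrade compatibility to equality using stability and the shared leaf set $X$, with the matching of vertex partitions coming from Proposition \ref{quasidef}. The paper makes your acknowledged ``delicate third step'' concrete by normalizing with a Möbius chart $M_{t_n}$ sending $\Phi_{t_n}(x_1),\Phi_{t_n}(x_2),\Phi_{t_n}(x_3)$ to $0,1,\infty$ and observing that since the modulus of $\Phi_{t_n}(A^\gamma)$ blows up, any $x$ on the same side of $\gamma$ as $x_1$ satisfies $M_{t_n}\circ\Phi_{t_n}(x)\to 0=\lim M_{t_n}\circ\Phi_{t_n}(x_1)$, which is exactly the compartition hypothesis you need.
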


\begin{proof} Consider a vertex $v\in\T_f$. We deduce from the definition of $X$ that $v\in T^X$ separates three vertices $x_1,x_2$ and $x_3$ which are elements of $X$. Let us relabel the vertex in $T^X$ separating the same three elements in $T^X$ by $v$. Then consider a projective chart $M_{t_n}:\S\to\hat\C$ that maps $\Phi_{t_n}(x_1),\Phi_{t_n}(x_2)$ and $\Phi_{t_n}(x_3)$ to $0,1$ and $\infty$.

Take $x\in X$ and suppose that $x$ and $x_1$ are on the same branch on $v\in\T_f$, then there is a curve $\gamma\subset \partial v$ such that $x$ and $x_1$ are in the same component of $\S\setminus \gamma$.
By construction of $\T^X$ and according to Proposition \ref{quasidef}, the points $M_{t_n}\circ \Phi_{t_n}(x)$, $M_{t_n}\circ \Phi_{t_n}(x_1)$ and the points $M_{t_n}\circ \Phi_{t_n}(x_2)$, $M_{t_n}\circ \Phi_{t_n}(x_3)$ lies in different components of $\hat\C$ minus an annulus whose modulus converges to infinity. 
Hence 
$$M_{t_n}\circ \Phi_{t_n}(x)\to 0=M_{t_n}\circ \Phi_{t_n}(x_1),$$ 
so $x$ and $x_1$ are in the same branch on $v\in T^X$. As we said in Remark \ref{stabShishikura}, the trees $T_f$ and $T^X$ are stable with the same set of leaves, hence Lemma \ref{compartition} and Lemma \ref{corostabpart} conclude the proof.
\end{proof}

With the same reasoning we can also show the following:

\begin{lemma}\label{idty}
 After changing the labeling of the internal vertices of $T^{-1}_f$ we have $T^{-1}_f\lhd T^Y$.
\end{lemma}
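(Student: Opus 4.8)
The plan is to mimic the proof of Lemma~\ref{equalTXTF} almost verbatim, replacing the multicurve $\Gamma_f$ and the post-critical marking set $X$ by their preimages $\Gamma_f^{-1} = f^{-1}(\Gamma_f)$ and $Y = f^{-1}(Z)$, and replacing the fixed projective charts $M_{t_n}$ by the charts naturally associated to the internal vertices of $T_f^{-1}$. First I would observe that, by construction, $T_f^{-1}$ has leaf set $f^{-1}(X) \subseteq f^{-1}(Z) = Y$, and since $Y$ is also the leaf set of $T^Y$, the two trees share a common set of leaves that includes the leaves of $T_f^{-1}$. By Remark~\ref{stabShishikura}, both $T_f^{-1}$ and $T^Y$ are stable. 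So the strategy is to verify the hypotheses of Lemma~\ref{compartition}: for every triple of distinct leaves $x_1,x_2,x_3$ of $T_f^{-1}$, the partitions of $f^{-1}(X)$ induced by the separating vertex in $T_f^{-1}$ and in $T^Y$ agree (after relabeling); Lemma~\ref{compartition} then gives $T_f^{-1} \lhd T^Y$.

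The heart of the argument, step by step, goes as follows. Fix a vertex $v \in \T_f^{-1}$; by stability it separates three leaves $x_1, x_2, x_3 \in f^{-1}(X)$, and I relabel the vertex of $T^Y$ separating these same three leaves to be $v$. As in Lemma~\ref{equalTXTF}, choose projective charts $M_{t_n}:\S\to\hat\C$ sending $\Phi_{t_n}(x_1),\Phi_{t_n}(x_2),\Phi_{t_n}(x_3)$ to $0,1,\infty$. Now take any $x \in f^{-1}(X)$ lying on the same branch at $v$ as $x_1$ in $\T_f^{-1}$; then there is a curve $\gamma \subset \partial v$ in $\Gamma_f^{-1}$ with $x,x_1$ in the same component of $\S \setminus \gamma$ while $x_2,x_3$ lie in the other component. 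The key input from Proposition~\ref{quasidef} is its second bullet applied to the curves of $\Gamma_f^{-1}$: since each $\gamma \in \Gamma_f$ has a fixed annulus $A^\gamma$ with $\mathrm{Modulus}(\Phi_t(A^\gamma)) \to \infty$, and since $f$ conjugates to $f_t$ via $\Phi_t$, the preimage annuli around curves of $\Gamma_f^{-1}$ also have moduli tending to infinity under $\Phi_{t_n}$ (each component of $f^{-1}(A^\gamma)$ is an annulus mapping onto $A^\gamma$ with some finite degree, hence with modulus $\geq \frac{1}{\deg}\,\mathrm{Modulus}(A^\gamma)$). Therefore $M_{t_n}\circ\Phi_{t_n}(x)$ and $M_{t_n}\circ\Phi_{t_n}(x_1)$ are separated from $M_{t_n}\circ\Phi_{t_n}(x_2), M_{t_n}\circ\Phi_{t_n}(x_3)$ by an annulus of modulus $\to\infty$ in $\hat\C$, forcing $M_{t_n}\circ\Phi_{t_n}(x) \to 0 = M_{t_n}\circ\Phi_{t_n}(x_1)$, so in the limit tree $T^Y$ the point $x$ sits in the same branch at $v$ as $x_1$. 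This shows the two partitions coincide on the branch containing $x_1$; running the same argument for $x_2$ and $x_3$ (using charts sending the triple to $1$ or $\infty$ as appropriate) covers all branches.

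Having checked the partition hypothesis for every triple, Lemma~\ref{compartition} yields $T_f^{-1}\lhd T^Y$ after a consistent relabeling of internal vertices, which is exactly the statement. I expect the main obstacle to be the modulus bookkeeping for the preimage annuli: one must be careful that the curves of $\Gamma_f^{-1}$ which are peripheral in $\S \setminus \Prep$ or which are identified under isotopy still inherit annuli of large modulus, and that the relabeling remains simultaneously consistent across all triples --- but since $\Phi_t$ genuinely conjugates $f$ to $f_t$, the preimage configuration $\Phi_t(\Gamma_f^{-1}) = f_t^{-1}(\Phi_t(\Gamma_f))$ is literally the $f_t$-preimage of a large-modulus multicurve, so the required moduli estimates follow from the grand-orbit choice of fundamental annuli made in the proof of Proposition~\ref{quasidef}. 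Note that, unlike in Lemma~\ref{equalTXTF}, I do \emph{not} expect equality $T_f^{-1} = T^Y$: the set $Y$ may contain leaves (preimages of critical values not already in $X$) that are not leaves of $T_f^{-1}$, so Lemma~\ref{lemtech} does not apply and only the compatibility $\lhd$ survives --- which is consistent with the statement of Lemma~\ref{idty}.
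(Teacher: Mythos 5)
Your proof is correct and follows essentially the same approach the paper intends, since the paper itself merely states ``With the same reasoning we can also show the following'' and leaves the details to the reader. You have usefully filled in the modulus estimate the paper glosses over --- that $\Phi_t(\Gamma_f^{-1}) = f_t^{-1}(\Phi_t(\Gamma_f))$ and each preimage annulus has modulus at least $\tfrac{1}{d}$ of the original, hence tends to infinity --- and you correctly observe that only $\lhd$, not equality, survives because $f^{-1}(X)$ may be a proper subset of $Y$, so Lemma~\ref{lemtech} does not apply.
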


We suppose for the rest of this paper that the labeling of the internal vertices of $T^X$ and $T^{-1}_f$ is such that $T_f= T^X$ and $T^{-1}_f\lhd T^Y$.


\subsubsection{Maps and local degree}

\begin{lemma} \label{lemsansnom}
Every edge of $T^{-1}_f$ considered as a subset of $T^Y$ maps onto an edge of $T_f$ considered as a subset of $T^Z$. All the elements of an edge of $T^{-1}_f$ considered as a subset of $T^Y$ have same degree.
\end{lemma}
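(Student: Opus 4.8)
The plan is to trace through the correspondence established in Theorem~\ref{translation} between the Shishikura tree map $\tau_f$ and the cover $\F$, using the already-proven identifications $T_f = T^X$ and $T_f^{-1}\lhd T^Y$ together with the basic properties of the stretching construction in Propositions~\ref{quasidef} and~\ref{compactcov}. First I would recall that an edge $e$ of $\T_f^{-1}$ corresponds to a non-peripheral Jordan curve $\gamma \in \Gamma_f^{-1} = f^{-1}(\Gamma_f)$, and that $f$ maps $\gamma$ onto a curve $f(\gamma)$ isotopic rel ${\cal P}_f$ to a curve in $\Gamma_f$, hence onto an edge of $T_f$ (or, more precisely, onto the arc in $T_f$ realizing that edge). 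At the level of the trees of spheres, the curve $\gamma$ separating the relevant marked points of $Y$ corresponds — after applying $\Phi_{t_n}$ and rechoosing projective charts — to an annulus of diverging modulus, which in the limit is exactly an edge of $T^Y$; and the commutation $f_t \circ \Phi_t = \Phi_t \circ f$ shows this edge is carried by $F$ to the limit of the image annuli $\Phi_{t_n}(f(A^\gamma))$, which is the edge of $T^Z$ corresponding to $f(\gamma)$. Since that edge lies in the copy of $T_f$ inside $T^Z$, this proves the first assertion.

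For the second assertion — constancy of the degree function $\deg_F$ along an edge of $T_f^{-1}$ — I would argue as follows. The degree function $\deg_F : Y \to \N$ was defined as the local degree of $f|_Y$ at points of $Y$, equivalently (via the defining property of a cover of trees of spheres) as the local degree $\deg_{f_v}(i_v(e))$ at the attaching point of an edge $e$. Along an edge of $T_f^{-1}$, viewed as an arc $[v,v']$ of vertices and edges in $T^Y$, all the intermediate internal vertices are vertices of $T^Y$ that do \emph{not} come from $T_f^{-1}$ (they appear only in the finer tree $T^Y$); I would invoke the stability/compatibility results (Lemma~\ref{corostabpart} and the discussion around it) together with the portrait structure to see that such intermediate vertices have exactly two marked points of $Y$ — namely the two attaching points of the arc — since the marked set $Y = f^{-1}(Z)$ places marked points only where $T_f^{-1}$ has leaves. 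Hence the spheres $\S_v$ at these intermediate vertices are bi-marked, so $f_v : \S_v \to \S_{F(v)}$ is a cover of a bi-marked sphere by a bi-marked sphere, which forces $f_v$ to be a power map $z \mapsto z^{d}$ in suitable coordinates; its local degrees at the two attaching points both equal $d$. Chaining these equalities along the arc gives that $\deg_F$ is constant on the whole edge, equal to the common value, which one then matches with $\deg_{\tau_f}(e)$.

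The main obstacle I anticipate is the bookkeeping in the second part: one must be careful that the intermediate vertices along an edge of $T_f^{-1}$ inside $T^Y$ really carry only two marked points of $Y$. This is where the appendix on stable trees should be used — the claim is essentially that the only extra vertices $T^Y$ has over $T_f^{-1}$ are the ones forced by the marking set $X \subset Y$ (the leaves adjoined to $\T_f^{-1}$), not genuinely new internal branch points, so any arc of $T^Y$ lying over a single edge of $\T_f^{-1}$ has no marked points in its interior. Once that is pinned down, the power-map argument and the final identification with $\deg_{\tau_f}$ are routine, the latter because $\tau_f$ was defined precisely so that its degree on an edge records the local degree of $f$ transverse to the corresponding curve $\gamma$, which is what the attaching-point local degrees of the $f_v$ compute in the limit.
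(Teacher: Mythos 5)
Your second assertion hinges on a false claim: that the intermediate vertices of $T^Y$ lying along an edge $[v,v']$ of $T^{-1}_f$ carry spheres with only two marked points. This cannot happen, because $T^Y$ is a stable tree (cf.~Remark~\ref{stabShishikura}), so every internal vertex has at least three adjacent edges and hence at least three attaching points on its sphere. Indeed, each of the $\geq 3$ branches at such a vertex contains at least one leaf of $T^Y$, so none of the intermediate spheres is ``bi-marked,'' and the power-map conclusion cannot be extracted from the marking data alone. The paper instead proves Lemma~\ref{lemsansnom} in Appendix~\ref{apstbtrees} via Lemma~\ref{lemmafunny}, and the actual input is of a different nature: critical points of a fibre map $f_v$ all sit at attaching points in $Y_v$ and their local degree is preserved across edges, so criticality propagates along the tree; moreover, by the result recalled from~\cite{A2}, critical \emph{internal} vertices lie on paths between critical \emph{leaves}. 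Because $Z$ was chosen to contain the critical values of $f$, the set $\V_F$ is contained in $Z$, hence the open ``annulus'' $]|v,v'|[$ of $T^Y$ contains no critical leaf, and any critical internal vertex in it must lie on the arc $]v,v'[$ and have exactly two critical points — both pointing along the arc — which is what forces the degree to be constant along the whole arc and, by the argument-about-ends, forces $F([v,v'])=[F(v),F(v')]$.

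Your first assertion also departs from the paper's route: you go back to the geometric picture of curves and the holomorphic motion $\Phi_t$, arguing that the image annulus $\Phi_{t_n}(f(A^\gamma))$ passes to the limit correctly. The paper avoids this because it is exactly where a collapse or sprout can occur in the limit: one must rule out that $F([v,v'])$ has an extra ``end'' not among $F(v),F(v')$, which is again done using the two-critical-points structure at intermediate vertices (no such end can receive both critical points). So both parts of the statement really rest on the absence of critical leaves in $]|v,v'|[$; replacing it with a bi-marked-sphere argument does not close the gap.
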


This lemma is proven in Annexe \ref{apstbtrees}.

Let us recall the following result which is a direct consequence of the Argument Principle:

\begin{lemma}\label{prinargmult}
Let $g:\S\to\S$ be a branched cover, $X$ be a finite subset of $\S$ containing the critical values of $g$, $\Gamma$ be a multicurve on $\S\setminus X$, $\gamma^{-1}$ be a connected component of $g^{-1}(\Gamma)$ and $D$ be a connected component of $\S\setminus\gamma^{-1}$.

Then $\gamma:=g(\gamma^{-1})$ is the unique curve of $\Gamma$ such that there exist $x,x'\in X$ in the different connected components of $\S\setminus \gamma$ such that  $$\deg_{f}(\gamma^{-1})=|\card D\cap f^{-1}(x)-\card D\cap f^{-1}(x')|,$$
where the cardinals are counted with multiplicity. 
Moreover this formula works for any couple $(x,x')$ chosen in different connected components of $\S\setminus \gamma$.
\end{lemma}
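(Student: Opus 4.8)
The plan is to read the claimed identity as a computation of the degree with which $g$ covers the two sides of $\gamma$ from inside the Jordan domain $D$, to extract it from the Argument Principle applied to $g$ along $\partial D=\gamma^{-1}$, and to obtain the uniqueness statement from the fact that the curves of a multicurve are pairwise non-isotopic. First I would fix the local picture. Since $\Gamma\subset\S\setminus X$ and $X$ contains the critical values of $g$, the set $g^{-1}(\Gamma)$ contains no critical point, hence it is a finite disjoint union of Jordan curves near each of which $g$ is a local homeomorphism. In particular $\gamma^{-1}$ is a Jordan curve, so $\S\setminus\gamma^{-1}$ has exactly two components and $\partial D=\gamma^{-1}$; moreover $g(\gamma^{-1})$, being open and closed in the curve of $\Gamma$ that contains it, equals that curve, so that $\gamma:=g(\gamma^{-1})$ belongs to $\Gamma$ and $g$ restricts to a covering $\gamma^{-1}\to\gamma$ of degree $d:=\deg_g(\gamma^{-1})\geq1$.

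Next I would analyse the counting function $N_D(y):=\card(D\cap g^{-1}(y))$, with preimages counted with local multiplicity. For $y\notin\gamma$ the fibre $g^{-1}(y)$ misses $\partial D=\gamma^{-1}$, so, working in a Moebius chart with $\infty\notin\gamma\cup\gamma^{-1}$ and applying the Argument Principle to $g$ on $D$, one gets $N_D(y)=P_D+w(y)$, where $P_D$ is the number of poles of $g$ in $D$ (independent of $y$) and $w(y)$ is the winding number about $y$ of the loop $g|_{\partial D}$. Since $g|_{\partial D}$ traverses the Jordan curve $\gamma$ exactly $d$ times in a fixed direction, $w(y)$ takes the value $\pm d$ on one of the two components $V,V'$ of $\S\setminus\gamma$ and the value $0$ on the other; as $P_D$ is constant, $N_D$ is therefore constant on $V$ and on $V'$ and its two values differ by exactly $d$. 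Consequently, for any $x,x'\in X$ lying in different components of $\S\setminus\gamma$ one has $\deg_g(\gamma^{-1})=|N_D(x)-N_D(x')|$ — which is the asserted identity, valid for every such couple, hence also the ``moreover'' part — whereas if $x,x'$ lie in the same component of $\S\setminus\gamma$ the right-hand side is $0$, strictly smaller than $d$.

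Finally I would establish uniqueness, which I read as the assertion that $\gamma=g(\gamma^{-1})$ is the only curve $\gamma_1\in\Gamma$ for which the identity $\deg_g(\gamma^{-1})=|N_D(x)-N_D(x')|$ holds for every $x,x'\in X$ separated by $\gamma_1$ (this being the force of the last sentence of the statement). Let $\gamma_1\in\Gamma$ with $\gamma_1\neq\gamma$. Being connected and disjoint from $\gamma$, the curve $\gamma_1$ lies in one component, say $V$, of $\S\setminus\gamma$; then one component $W$ of $\S\setminus\gamma_1$ lies in $V$, while the other component $W'$ contains $\gamma$ together with the region $R=V\cap W'$ bounded by $\gamma$ and $\gamma_1$. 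Since $\gamma$ and $\gamma_1$ are distinct elements of the multicurve $\Gamma$, they are non-isotopic rel $X$, so $R$ contains a point $x_0\in X$; by non-peripherality of $\gamma_1$ the component $W$ contains a point $x\in X$. Both $x$ and $x_0$ lie in $V$, so $N_D(x)=N_D(x_0)$, while $x\in W$ and $x_0\in W'$ are separated by $\gamma_1$; hence the couple $(x,x_0)$ witnesses $|N_D(x)-N_D(x_0)|=0\neq d$, so $\gamma_1$ fails the identity. Therefore $\gamma$ is the unique curve with the stated property.

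I expect the crux to be the point in the second paragraph where one argues that the winding number of $g|_{\partial D}$ jumps by exactly $\pm\deg_g(\gamma^{-1})$ across $\gamma$ — this is what turns the conclusion into an equality rather than an inequality — which ultimately rests on the fact that, on a thin annular neighbourhood of $\gamma^{-1}$, the map $g$ is a degree-$d$ covering of a thin annular neighbourhood of $\gamma$ that carries the $D$-side of $\gamma^{-1}$ onto a single side of $\gamma$. Everything else is formal.
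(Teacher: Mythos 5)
The paper does not actually prove Lemma~\ref{prinargmult}; it is recorded with the remark that it is ``a direct consequence of the Argument Principle,'' so there is no authorial argument to compare against. Your reconstruction is a correct and careful realization of precisely that suggestion: since $X$ absorbs the critical values, $g^{-1}(\Gamma)$ carries no critical points, $\gamma^{-1}$ is a Jordan curve covering $\gamma:=g(\gamma^{-1})$ with degree $d:=\deg_g(\gamma^{-1})$, and the Argument Principle turns $N_D(y):=\card(D\cap g^{-1}(y))$ into $P_D$ plus the winding number of $g|_{\gamma^{-1}}$ about $y$, which is locally constant off $\gamma$ and jumps by exactly $\pm d$ across $\gamma$; both the identity and the ``moreover'' clause follow at once. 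The one hygiene point you leave slightly implicit is the choice of chart: excluding $\infty$ from $\gamma\cup\gamma^{-1}$ is not quite enough, one also wants $\infty\notin D$ so that $D$ is a bounded Jordan domain to which the Argument Principle applies directly; this is harmless since $\S\setminus(\overline D\cup\gamma)$ is nonempty.

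Your treatment of uniqueness is the genuinely useful part and you are right to flag the reading. Taken literally, ``the unique curve such that there \emph{exist} $x,x'$ separated by it with $|N_D(x)-N_D(x')|=d$'' is false in general: for $\gamma_1\neq\gamma$ one can pick $x$ on the $\gamma$-far side of $\gamma_1$ and $x'$ on the other side of $\gamma$ and recover the same difference $d$. What is true, and what the ``moreover'' sentence implicitly supplies, is that $\gamma$ is the unique curve for which the identity holds for \emph{every} separated pair; your argument — locating a point of $X$ in the open annulus between $\gamma$ and $\gamma_1$ via their non-isotopy rel $X$, and a point of $X$ in the $\gamma$-far component of $\S\setminus\gamma_1$ via non-peripherality, on which $N_D$ agrees — proves exactly that, and this is also the form of uniqueness that is actually invoked in the proof of Theorem~\ref{translation}.
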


In \cite{A3}(proof of Proposition 3.14), the lemma below is proven by passing to the limit the previous lemma.

\begin{lemma}\label{princarg} Let $\F:\T^Y\to\T^Z$ be a cover between trees of spheres limit of marked rational maps. Let $e$ be an edge of $ T^Y$adjacent to a vertex $v$ and $D$ denote a connected component of $T^Y\setminus \{e\}$. Then the edge $F(e)$ is the unique edge of $T^Z$ satisfying for a couple $(z,z')\in Z^2$ with $z$ and $z'$ lie in different components of $T^Z\setminus \{F(e)\}$
$$\deg_{f_v}(i_v(e))=|\card D\cap F^{-1}(z)-\card D\cap F^{-1}(z')|,$$
where $\deg_{f_v}(i_v(e))$ denotes the local degree of $f_v$ at the attaching point of $v$ and the cardinals are counting the critical points with multiplicity. Moreover this equality holds also for any such couple $(z,z')$.
\end{lemma}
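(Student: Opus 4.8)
The statement to prove is Lemma~\ref{princarg} (the ``passing to the limit'' version of Lemma~\ref{prinargmult}), so the plan is to approximate the cover between trees of spheres by the marked rational maps $(f_{t_n}, \Phi_{t_n}\circ i, \Phi_{t_n}\circ j)$ of Proposition~\ref{compactcov} and transport the exact identity of Lemma~\ref{prinargmult} through the limit. First I would fix the edge $e$ of $T^Y$ adjacent to $v$ and the component $D$ of $T^Y\setminus\{e\}$. Since $\F$ is a limit of marked rational maps, there is a sequence of projective charts $M_n\colon\S\to\hat\C$ realizing the sphere $\S_v$ as the renormalized limit of $(\S,f_{t_n})$ near the scale of $v$; the edge $e$ then corresponds to an annulus $A_n\subset\S$ whose modulus tends to infinity, and $D$ corresponds to one of the two components $D_n$ of $\S\setminus A_n$. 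The leaves of $Y$ lying in the branch $D$ are exactly those whose marked points $\Phi_{t_n}(\cdot)$ land in $D_n$ for $n$ large, and likewise for $Z$ on the target side via the corresponding charts and annuli.

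\smallskip
Next I would apply Lemma~\ref{prinargmult} to the branched cover $g=f_{t_n}$ with $X$ the (moving) set $\Phi_{t_n}(X)$ enlarged by the critical values, $\Gamma$ the multicurve $\Phi_{t_n}(\Gamma)$, $\gamma^{-1}$ the curve in $A_n$ representing $e$, and $D_n$ the chosen complementary component. This yields, for every pair $(z,z')$ of points of $\Phi_{t_n}(Z)$ lying in different components of $\S$ minus $f_{t_n}(\gamma^{-1})$,
\[
\deg_{f_{t_n}}(\gamma^{-1}) \;=\; \bigl|\,\card\, D_n\cap f_{t_n}^{-1}(z)\;-\;\card\, D_n\cap f_{t_n}^{-1}(z')\,\bigr|,
\]
with multiplicities. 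The key point is that each term on the right stabilizes as $n\to\infty$: by the dynamical convergence, the local degree $\deg_{f_{t_n}}(\gamma^{-1})$ equals the local degree $\deg_{f_v}(i_v(e))$ for $n$ large (this is part of what ``converges dynamically'' encodes, together with Remark~\ref{stabShishikura} and Lemma~\ref{lemsansnom} guaranteeing that $e$ maps to a genuine edge), and $\card\, D_n\cap f_{t_n}^{-1}(z)$ counted with multiplicity converges to $\card\, D\cap F^{-1}(z)$ once $z$ is identified with a leaf of $Z$ in the appropriate branch — here I would use that preimages in $Y$ of a point of $Z$ distribute over the branches of $T^Y$ according to $F^{-1}$, and that $D$ collects exactly the leaves of $Y$ sitting in $D_n$ for large $n$. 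That $F(e)$ is the \emph{unique} edge with this property follows from the uniqueness clause in Lemma~\ref{prinargmult} applied at finite level, passed to the limit, together with the fact that distinct edges of $T^Z$ separate $Z$ differently.

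\smallskip
The main obstacle I anticipate is the careful bookkeeping of multiplicities under the limit: one must check that no critical point of $f_{t_n}$ escapes to an annulus being pinched (so that $\card\, D_n\cap f_{t_n}^{-1}(z)$ counted with multiplicity really does converge and is not off by the contribution of a vanishing cycle), and that the point $z'$ can always be chosen in the ``other'' branch at finite level consistently with the limiting configuration. This is precisely the content hidden in the phrase ``limit of marked rational maps'' and in the stability of the trees; I would invoke \cite{A3}(proof of Proposition 3.14) for the precise convergence of the counting functions rather than reproving it, and reduce the rest to the uniqueness already established at finite level. The ``Moreover'' clause — that the identity holds for \emph{any} admissible pair $(z,z')$ — is then immediate from the corresponding ``Moreover'' in Lemma~\ref{prinargmult} at each finite level.
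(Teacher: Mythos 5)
The paper does not give its own proof of Lemma~\ref{princarg}; it simply cites \cite{A3} (proof of Proposition 3.14), noting that the result is obtained by passing Lemma~\ref{prinargmult} to the limit under the dynamical convergence of marked rational maps. Your sketch outlines exactly that passage-to-the-limit strategy, correctly flags the delicate points (stabilization of local degrees, multiplicities of preimages in pinching annuli), and ultimately defers to the same reference for the technical convergence, so your approach coincides with the paper's.
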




\begin{proof}[Theorem \ref{translation}]
Lemma \ref{equalTXTF} proved the equality ${T_f= T^X}$.

First consider a vertex $v$ of $\T_f^{-1}$. 
Take any edge $e_{-1}$ between $v$ and $v'\in\T^{-1}_f$. As an edge of $\T^{-1}_f$, $e_{-1}$ is a closed curve in $\S$. 
Denote by $D$ a connected component of $\S\setminus e_{-1}$.
According to Lemma \ref{prinargmult}, for any $z,z'\in Z$ lying in the distinct connected components of $\S\setminus f(e_{-1})$, we have $\deg_{f}(e_{-1})=|\card D\cap f^{-1}(z)-\card D\cap f^{-1}(z')|$.

According to Lemma \ref{idty}, $e_{-1}$ can be considered as a subset of $T^Y$ and Lemma \ref{lemsansnom}
assures that $F(e_{-1})$ is the arc between $F(v)$ and $F(v')$ in $T^Z$ which corresponds to an edge $e$ of $T^X$. Comparing the formulas given in Lemma \ref{princarg} and Lemma \ref{prinargmult}, as $f$ and $F$ are equal on $Y$, we deduce that $e=f({e_{-1}})$. Hence we proved that 
$$F(e_{-1})=F([v,v'])=[F(v),F(v')]=e=f(e_{-1}).$$
And the assumptions about the local degrees follow the same way.
\end{proof}


\section{Trees and rescaling limits}\label{sectionend}

In this section we prove Theorem \ref{TOF} and Theorem \ref{TOF2}. 

Let  $f$ be an element of the sequence described in Section \ref{selfgraftingue}. According to Remark \ref{fundrem}, the map $\tau_f$ has a periodic vertex $v$ of period $k>1$,  the degree along this cycle is $4$ and a non critical branch of $\T^{-1}_f$ maps to a critical one of $\T_f$ by $\tau_f^k$ so the corresponding $f_v^{k}$ is not monomial. Hence Theorem \ref{translation} proves Theorem \ref{TOF2}.

Theorem \ref{TOF} is a translation of Theorem \ref{TOF2} via the following theorem.

 \begin{Thmbis}[\cite{A1}Theorem 2]
Let ${\bf F}$ be a portrait, let $(f_n,y_n,z_n)_n\in \Rat_{{\bf F},X}$ and let $(\F,\T^X)$ be a dynamical system of trees of spheres. Suppose that
$$\displaystyle { f}_n\overset{\lhd}{\underset{\phi_n^Y,\phi_n^Z}\longrightarrow}{ \F}.$$

If $v$ is a periodic internal vertex in a critical cycle with exact period $k$, then ${f^k_v:\S_v\to\S_v}$ is a rescaling limit corresponding to the rescaling $(\phi^Y_{n,v})_n$. 

In addition, for every $v'$ in the cycle, $(\phi^Y_{n,v'})_n$ and $(\phi^Y_{n,v})_n$ are dynamically dependent rescalings.
\end{Thmbis}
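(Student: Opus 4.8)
The plan is to unravel the definition of the $\lhd$-convergence $f_n\overset{\lhd}{\longrightarrow}\F$ along the cycle and then run a telescoping argument. Write $v_i:=F^i(v)$, so $v_0=v_k=v$ and the cycle of spheres reads $\S_{v_0}\xrightarrow{f_{v_0}}\S_{v_1}\to\cdots\to\S_{v_{k-1}}\xrightarrow{f_{v_{k-1}}}\S_{v_0}$. Since $v$ is periodic for the dynamical system, each $v_i$ is a vertex of $T^X$; in particular the three points of $X_{v_i}$ give a common normalization for the two charts, so that the convergence of \cite{A1} is set up with $\phi^Y_{n,v_i}\circ(\phi^Z_{n,v_i})^{-1}\to\mathrm{Id}$. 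By definition of the convergence one also has $\phi^Z_{n,v_{i+1}}\circ f_n\circ(\phi^Y_{n,v_i})^{-1}\to f_{v_i}$ locally uniformly on $\S$ minus a finite set; combining, $\phi^Y_{n,v_{i+1}}\circ f_n\circ(\phi^Y_{n,v_i})^{-1}\to f_{v_i}$ locally uniformly off a finite set.

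For the first assertion, insert the identities $(\phi^Y_{n,v_i})^{-1}\circ\phi^Y_{n,v_i}$ between the successive factors of $f_n^{\,k}$ to obtain
\[
\phi^Y_{n,v}\circ f_n^{\,k}\circ(\phi^Y_{n,v})^{-1}
=\bigl(\phi^Y_{n,v_k}\circ f_n\circ(\phi^Y_{n,v_{k-1}})^{-1}\bigr)\circ\cdots\circ\bigl(\phi^Y_{n,v_1}\circ f_n\circ(\phi^Y_{n,v_0})^{-1}\bigr).
\]
I would then apply $k-1$ times the standard lemma that if $g_n\to g$ and $h_n\to h$ locally uniformly off finite sets and $h$ is non-constant, then $g_n\circ h_n\to g\circ h$ locally uniformly off a finite set (non-constancy of $h$ is what keeps $h^{-1}(\text{finite})$ finite). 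Each $f_{v_i}$ is a non-constant branched cover, so the product converges locally uniformly off a finite set to $f_{v_{k-1}}\circ\cdots\circ f_{v_0}=f_v^{\,k}$. Because the cycle is critical, $\deg f_v^{\,k}=\prod_i\deg f_{v_i}\ge 2$; hence, by definition, $(\phi^Y_{n,v})_n$ is a rescaling with rescaling limit $f_v^{\,k}$.

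For the second assertion, the same telescoping applied to the partial compositions $f_n^{\,m}$ between the charts at $v_0$ and $v_m=v'$, and to $f_n^{\,k-m}$ between the charts at $v_m$ and $v_k=v_0$, yields exactly the two convergences required to establish that $(\phi^Y_{n,v'})_n$ and $(\phi^Y_{n,v})_n$ are dynamically dependent, with $g_1=f_{v_{m-1}}\circ\cdots\circ f_{v_0}$ and $g_2=f_{v_{k-1}}\circ\cdots\circ f_{v_m}$ (finite compositions of non-constant branched covers, hence non-constant, and satisfying $g_2\circ g_1=f_v^{\,k}$), the finite exceptional sets $S_1,S_2$ being the ones produced by the composition lemma.

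I expect the only real work to be the bookkeeping in the composition lemma: one must track, at each of the $k$ applications, the finitely many points that have to be removed, so that the exceptional set stays finite after the whole telescoping; the rest is checking that the "common normalization on $T^X$'' built into the convergence of \cite{A1} gives $\phi^Y_{n,v_i}\circ(\phi^Z_{n,v_i})^{-1}\to\mathrm{Id}$ along the cycle — which is where the hypothesis that $v$ is a vertex of the dynamical system, and not merely of $T^Y$, is used — and matching the placement of the charts in Definition~\ref{DDR} with the conventions of \cite{A1}.
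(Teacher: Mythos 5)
Note first that this statement is not proved in the present paper: it is quoted verbatim as Theorem~2 of \cite{A1} and used as a black box to pass from Theorem~\ref{TOF2} to Theorem~\ref{TOF}, so there is no in-text proof against which to compare. Your sketch is nonetheless the expected argument and I see no gap in it. The composition lemma you lean on is valid: if $h_n\to h$ off a finite $S_1$ and $g_n\to g$ off a finite $S_2$, with $h$ a finite branched cover, then $g_n\circ h_n\to g\circ h$ off $S_1\cup h^{-1}(S_2)$, which is again finite; iterating over the $k$ factors keeps the exceptional set finite, as you note. The compatibility $\phi^Y_{n,v_i}\circ(\phi^Z_{n,v_i})^{-1}\to\mathrm{Id}$ is exactly what $\T^X\lhd\T^Y$ and $\T^X\lhd\T^Z$ buy along the cycle, since the periodic vertices and their normalizing triples in $X_{v_i}$ are shared by both sides. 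Criticality of the cycle gives $\deg f_v^k=\prod_i\deg f_{v_i}\ge 2$, so the limit is indeed a rescaling limit. For dynamical dependence, the split at $v'=v_m$ yields both convergences required by Definition~\ref{DDR}; the only point to police is which of $\phi^Y_{n,v}$ or its inverse plays the role of $M_n$, so that the two formulas land with $M_n$ and $L_n$ on the required sides — a convention to fix, not a mathematical gap, and you flagged it yourself.
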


%
%
%

%

\appendix 
\makeatletter
\def\@seccntformat#1{Appendix~\csname the#1\endcsname:\quad}
\makeatother
\section{Compatibility of covers}\label{apstbtrees}

Here we prove a more general result than Lemma \ref{lemsansnom}.



\begin{definition}We say that a cover between trees of spheres ${\F':\T^{Y'}\to\T^{Z'}}$ is compatible with a cover between trees of spheres $\F:\T^Y\to\T^Z$ of same degree and write $\F'\lhd\F$ if
\begin{itemize}
\item $\T^{Y'}\lhd\T^Y$,
\item $ \T^{Z'}\lhd\T^Z$, and
\item $F'=F|_{Y}$.
\end{itemize}
\end{definition}

For two vertices $v,v'$ of a combinatorial tree $T$, we define $[v,v']$ to be the arc between $v$ and $v'$ and $]v,v'[:=[v,v']\setminus \{v,v'\}$. We define the annulus $]|v,v'|[$ to be the connected component of $T\setminus\{v,v'\}$ containing $]v,v'[$.
For a cover between trees of spheres $\F$ we denote by $\V_F$ the set of leaves which are the images of the critical leaves of $\F$.

\begin{theorem} 
Let $\F:\T^Y\to\T^Z$ be a cover between trees of spheres and $Z'$ be a set with at least three elements containing $\V_F$. There exists a unique cover between trees of spheres $\F':\T^{F^{-1}(Z')}\to\T^{Z'}$ such that $\F'\lhd\F$.
\end{theorem}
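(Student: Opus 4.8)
The plan is to construct $\F'$ by "collapsing" $\F$ onto the prescribed leaf set $Z'$ downstairs and its preimage $Y' := F^{-1}(Z')$ upstairs, and then to check uniqueness. First I would build the target tree of spheres $\T^{Z'}$: by the general principle already used in the paper (cf. Lemma \ref{compartition} and Lemma \ref{corostabpart}), the combinatorial tree $T^{Z'}$ is forced — it is the stabilization of $T^Z$ restricted to the leaf set $Z'$, i.e. one keeps only the vertices of $T^Z$ that separate three leaves of $Z'$, and each surviving vertex $w$ inherits its sphere $\S_w$ but with attaching points only for those edge-directions that still lead to at least one leaf of $Z'$ (directions that get merged are identified to a single attaching point via the arcs $]|w,w'|[$). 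Since $Z'$ has at least three elements this tree is nonempty, and since $\T^{Z'}\lhd\T^Z$ is required, this recipe is the only possibility, which already gives the downstairs half of the forced structure. I would do the same upstairs with $Y' = F^{-1}(Z')$: because $F$ maps leaves to leaves, $Y'$ is a finite set of leaves of $T^Y$, and $\T^{Y'}$ is the corresponding stabilization of $\T^Y$.

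Next I would define the cover $\F':\T^{Y'}\to\T^{Z'}$. The map $F':Y'\to Z'$ is just $F|_{Y'}$, which is well defined since $F(Y')\subseteq Z'$ by definition of $Y'$. For the combinatorial map $T^{Y'}\to T^{Z'}$, I would argue that $F$ carries vertices that separate three $Y'$-leaves to vertices that separate three $Z'$-leaves: this is where the hypothesis $\V_F\subseteq Z'$ is essential, because a vertex $v$ upstairs that separates three directions might have some of those directions collapse under $F$ only if the corresponding critical values fail to be in $Z'$; requiring $\V_F\subseteq Z'$ exactly prevents a stable vertex of $T^{Y'}$ from mapping to a non-stable (hence collapsed) vertex of $T^Z$. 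So $F$ descends to a map of the stabilized trees, sending edges to edges — here I would invoke Lemma \ref{princarg}: the image of an edge $e'$ of $T^{Y'}$, viewed as an arc in $T^Y$, is an arc in $T^Z$ whose endpoints are consecutive surviving vertices, hence a single edge of $T^{Z'}$, and the local degree is read off by the same cardinality-difference formula. On each surviving internal vertex $v$ of $T^{Y'}$ I take the holomorphic ramified cover $f'_v := f_v : \S_v\to\S_{F(v)}$ unchanged; the three defining compatibility conditions (cover off the marked sets, commutation with attaching maps, equality of local degrees across an edge) are inherited from $\F$ once one checks that the new attaching points on $\S_v$ and $\S_{F(v)}$ still correspond under $f_v$ — which follows from $f_v\circ i_v = i_{F(v)}\circ F$ applied to the merged edge-directions. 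This yields $\F'$ with $\F'\lhd\F$.

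For uniqueness, suppose $\F''\lhd\F$ is another such cover. Then $\T^{Y''}=\T^{F^{-1}(Z')}$ and $\T^{Z''}$ have the same leaf sets as $\T^{Y'},\T^{Z'}$, are stable by definition of covers between trees of spheres, and are compatible with $\T^Y,\T^Z$ respectively; by Lemma \ref{corostabpart} (applied to the two competing stable trees sharing a leaf set, both compatible with the same ambient tree) their combinatorial trees coincide with the stabilizations constructed above, and the condition that identified edges be attached at the same place pins down the spheres and attaching points. Finally $F''=F|_{Y'}=F'$ forces the combinatorial maps to agree, and the holomorphic covers $f''_v$ must equal $f_v$ since they are restrictions of the same $f_v$ between the same spheres. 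Hence $\F''=\F'$.

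I expect the main obstacle to be the bookkeeping at the collapsed vertices: precisely verifying that when several edge-directions at a vertex $v$ of $T^Y$ get merged in $T^{Y'}$, the single resulting attaching point is consistent both with $F$ downstairs and with the local-degree additivity, and that no stable vertex of the stabilized upstairs tree is sent by $F$ into a part of $T^Z$ that has been collapsed away — this last point being exactly what $\V_F \subseteq Z'$ buys us, and it deserves a careful argument via Lemma \ref{princarg} rather than a hand-wave.
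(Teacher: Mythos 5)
Your plan follows the paper's overall architecture — stabilize $T^Z$ to $T^{Z'}$ by keeping only vertices that separate three leaves of $Z'$, pull back along $F$ to get $\T^{Y'}$, restrict the holomorphic maps, and obtain uniqueness by rigidity of stable trees — so the setup is essentially the same. But there is a genuine gap at the one step that carries the technical weight.

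The crux is that an edge of $T^{Y'}$ is, in $T^Y$, an \emph{arc} $[v,v']$ possibly made of several edges, and you need to show that $F([v,v'])$ is again an arc (namely $[F(v),F(v')]$, a single edge of $T^{Z'}$) with a well-defined degree. You claim to get this from Lemma \ref{princarg}, but that lemma is a local statement about a \emph{single} edge of $T^Y$ and its image edge in $T^Z$, computed by a cardinality-difference formula. Applied edge-by-edge along $[v,v']$ it does not preclude $F([v,v'])$ from branching or backtracking, and it does not compare the degrees of consecutive edges along the arc. The paper closes this gap with a dedicated Lemma \ref{lemmafunny}: if the annulus $]|v,v'|[$ contains no critical leaf of $\F$, then every vertex of $[v,v']$ has the same degree and $F([v,v'])=[F(v),F(v')]$. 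Its proof uses a structural fact you do not invoke — critical internal vertices form paths between critical leaves (from \cite{A2}) — which forces any vertex $v''$ in $]v,v'[$ to have exactly two critical points, so that $F([v,v'])$ cannot have a third end. The hypothesis $\V_F\subset Z'$ is used precisely to ensure $Y'$ contains every critical leaf, hence $]|v,v'|[$ contains none, hence Lemma \ref{lemmafunny} applies. You correctly identified that $\V_F\subseteq Z'$ is the key hypothesis, but the mechanism by which it is deployed is the path-structure of the critical locus, not the argument-principle count of Lemma \ref{princarg}. Without that ingredient the claim that ``$F$ descends to a map of stabilized trees sending edges to edges'' is precisely the hand-wave you warned yourself against.

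A secondary point: your uniqueness argument applies Lemma \ref{corostabpart} to ``two competing stable trees sharing a leaf set, both compatible with the same ambient tree,'' but that lemma assumes one of the two is compatible with the other ($T_1\lhd T_2$), which is not what you have a priori. You can repair this by first observing that compatibility with $T^Z$ plus stability forces the vertex set (each internal vertex is the unique $T^Z$-vertex separating some triple of $Z'$-leaves), but as written the citation does not quite fit.
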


\begin{proof} We define $\T^{Z'}$ to be the tree of sphere whose set of leaves is $Z'$, whose internal vertices are the vertices of $T^Z$ separating at least three elements of $Z'$ and whose edges and their attaching points are such that $ \T^{Z'}\lhd\T^Z$. We define $Y':=F^{-1}(Z')$ and $\T^{Y'}$ whose vertices are the preimage by $F$ of the one of $T^{Z'}$. Clearly the internal vertices of $T^{Y'}$ are the vertices of $T^Y$ who separate at least three elements of $Y'$. We define again the edges and their attaching points of $\T^{Y'}$ to be such that  and $\T^{Y'}\lhd\T^Y$. We just have to prove that $F':=F|_{T^{Y}}$ is a combinatorial tree map and that the corresponding $\F':=\F|_{\T^{Y}}$ is a cover between trees of spheres.

Consider an edge of $T^{Y'}$ between two vertices $v$ and $v'$. As $\V_F\subset Z'$, the set $Y'$ contains all of the critical leaves of $F$ and the annulus $]|v,v'|[$ does not contain any critical leave. Then the result follows from Lemma \ref{lemmafunny} below.
\end{proof}

\begin{lemma}\label{lemmafunny} If Suppose that $[v,v']$ is an arc and that $]|v,v'|[$ does not contain any critical leaf, then all of the elements of $[v,v']$ have same degree and $F([v,v'])=[F(v),F(v')]$.
\end{lemma}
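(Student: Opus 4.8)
The plan is to prove Lemma \ref{lemmafunny} by a combinatorial induction along the arc $[v,v']$, using Lemma \ref{princarg} as the main tool to pin down both the image edge and the local degree at each step. First I would observe that it suffices to treat the case where $[v,v']$ is a single edge $e$; the general case follows by concatenation, since if $[v,v']=[v,w]\cup[w,v']$ with $w$ an internal vertex and neither sub-arc's annulus contains a critical leaf (both being sub-annuli of $]|v,v'|[$), then applying the edge case repeatedly shows all edges have the same degree $\delta$ and that $F$ maps them to a sequence of edges in $T^Z$; I would then need to check that these image edges actually form an arc $[F(v),F(v')]$ rather than backtracking, which is where the ``no critical leaf'' hypothesis is used crucially.

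The core of the argument is the following. Let $e$ be an edge of $T^Y$ adjacent to $v$, let $D$ be the connected component of $T^Y\setminus\{e\}$ on the $v$ side, and suppose the open annulus $]|v,v'|[$ (or more simply the interior of $e$) contains no critical leaf. By Lemma \ref{princarg}, $F(e)$ is the unique edge of $T^Z$ for which $\deg_{f_v}(i_v(e))=|\card D\cap F^{-1}(z)-\card D\cap F^{-1}(z')|$ for $z,z'$ in different components of $T^Z\setminus\{F(e)\}$, and the same formula holds computing $\deg_{f_{v'}}(i_{v'}(e))$ with $D'$ the complementary component. The key point is that when $e$ contains no critical leaf, the difference $\card D\cap F^{-1}(z)-\card D'\cap F^{-1}(z)$ is controlled: the set $F^{-1}(z)$ on the $e$--side contributes equally on both sides once one accounts correctly, so the local degrees of $f_v$ at $i_v(e)$ and of $f_{v'}$ at $i_{v'}(e)$ agree — this is essentially the third bullet in the definition of a cover between trees of spheres, giving that the ``degree of the edge'' $\deg(e)$ is well defined, equal to this common local degree. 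Iterating along $[v,v']$, the absence of a critical leaf in $]|v,v'|[$ means no sphere along the arc has a critical point away from the two attaching points of the incoming and outgoing edges, so by Riemann--Hurwitz each $f_{w}$ for $w$ in the open arc is an unbranched-away-from-$\{i_w(e_{in}),i_w(e_{out})\}$ cover, forcing the two attaching-point degrees to coincide and forcing $F$ not to fold the arc back on itself; hence $F([v,v'])=[F(v),F(v')]$ and all edges share the common degree.

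I expect the main obstacle to be the bookkeeping that shows $F$ restricted to the arc is injective, i.e. that $F([v,v'])$ is genuinely an embedded arc $[F(v),F(v')]$ and not a shorter arc traversed with backtracking. This is exactly where one must exploit that $]|v,v'|[$ contains no critical leaf: if $F$ folded the arc at some internal vertex $w$, the map $f_w$ would have the two edges of the arc at $w$ mapping to the same edge of $T^Z$, which by the local-degree/cardinality formula of Lemma \ref{princarg} would force a critical point of $f_w$ at one of these attaching points, contradicting criticality being confined to the leaves in $\V_F$ and the hypothesis on $]|v,v'|[$. Once non-folding is established, constancy of the degree along the arc is the routine multiplicativity of local degrees together with the matching condition in the definition of a cover, and $F([v,v'])=[F(v),F(v')]$ is immediate. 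Finally, Lemma \ref{lemsansnom} is the special case $\T^{Y'}=\T^{-1}_f$, $\T^{Z'}=\T_f$ (equivalently, $Z'=Z$ hence $Y'=Y$, and every edge of $T^{-1}_f\lhd T^Y$ has its interior free of critical leaves because critical leaves of $\F$ are genuine leaves, not interior points of edges of $T^{-1}_f$), so it follows from Lemma \ref{lemmafunny} applied edge by edge.
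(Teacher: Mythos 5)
Your overall plan (use Riemann--Hurwitz to see that each sphere along the open arc has exactly two critical points, namely the two arc attaching points, and then deduce degree constancy and non-folding) is in the right direction and close to the paper's proof, but two steps in your write-up are not actually correct as stated.

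First, the inference in your second paragraph --- ``the absence of a critical leaf in $]|v,v'|[$ means no sphere along the arc has a critical point away from the two attaching points of the incoming and outgoing edges'' --- is true but is not free. The paper obtains it from a structural fact it quotes from \cite{A2}: the critical internal vertices of a cover between trees of spheres form paths joining critical leaves. Granting that, a critical point of some $f_{v''}$ ($v''\in\,]v,v'[$) at the attaching point of a side branch would propagate along a critical path to a critical leaf inside $]|v,v'|[$, contradicting the hypothesis. Without invoking this fact (or reproving it), your assertion is a gap; you should not leapfrog directly from ``no critical leaves in the annulus'' to ``no critical points off the arc.'' Also, the claim that $\deg_{f_v}(i_v(e))=\deg_{f_{v'}}(i_{v'}(e))$ is the third bullet in the definition of a cover between trees of spheres and holds for \emph{every} edge unconditionally; you do not need Lemma~\ref{princarg} or the no-critical-leaf hypothesis for it, so that part of your first two paragraphs is just overhead.

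Second, and more importantly, your stated reason for the non-folding contradiction is wrong. You write that a fold at an internal vertex $w$ ``would force a critical point of $f_w$ at one of these attaching points, contradicting criticality being confined to the leaves in $\V_F$.'' But $\V_F$ is defined as the set of \emph{critical values} which are leaves, not the critical points; and for a cover between trees of spheres there is no prohibition at all against $f_w$ having critical points at attaching points --- on the contrary, that is the generic situation, and indeed each $f_{v''}$ along the arc \emph{does} have critical points precisely there. The actual contradiction the paper derives at a would-be fold $w\in\,]v,v'[$ is this: the two arc-edges at $w$ are the \emph{only} critical points of $f_w$, and folding would map both to the same attaching point of $\S_{F(w)}$; but a degree-$d$ branched cover of the sphere with exactly two critical points of local degree $d$ cannot send both to the same point (the fiber over that point would be too large, or, equivalently, a two-critical-point sphere cover must have distinct critical values). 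So the folding step of your argument needs to be repaired to reach the correct impossibility, rather than an alleged contradiction with $\V_F$.
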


\begin{proof} The critical internal vertices form paths between critical leaves (see \cite{A2}). Hence, as $]|v,v'|[$ does not contain critical leaves, its only critical vertices can be on $]v,v'[$. It follows that for any $v''\in]v,v'[$, the map $f_{v''}$ has exactly two critical points, so these ones have same degree and all the $f_{v''}$ have same degree.

The image of $[v,v']$ is a connected graph that contains $F(v)$ and $F(v')$ so $[F(v),F(v')]\subset F([v,v'])$. Suppose that $F([v,v'])$ has an end $F(w)$ different than $F(v)$ and $F(v')$. Then $F(w)$  has an unique adjacent edge in $F([v,v'])$ whose attaching point is thus the image of the two only critical points of $f_{v''}$ which is no possible, so we have a contradiction.
\end{proof}

For every cover $\F$, let us denote by $\Oub_{{\bf F},{\bf F'}}(\F)$ the cover $\F'$ associated to $\F$ by the previous theorem. In fact we have proven the following result:

\begin{theorem} 
The application $\Oub_{{\bf F},{\bf F'}}:\revTC_{{\bf F}}\to\revTC_{{\bf F'}}$ is continuous.
\end{theorem}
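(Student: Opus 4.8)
The statement asserts that the map $\Oub_{{\bf F},{\bf F'}}$, which sends a cover $\F$ between trees of spheres (with portrait ${\bf F}$) to the unique subcover $\F'$ over the leaf set $Z'$ provided by the previous theorem, is continuous on $\revTC_{{\bf F}}$. The plan is to argue continuity by a sequential/limiting argument, using the fact that $\revTC_{{\bf F}}$ carries the topology of convergence of covers between trees of spheres (the one used throughout \cite{A1}, \cite{A3}) and that the construction of $\F'$ is, combinatorially, nothing more than forgetting the leaves not in $Y'=F^{-1}(Z')$ and erasing the resulting unstable vertices. First I would take a convergent sequence $\F_n\to\F$ in $\revTC_{{\bf F}}$ and set $\F'_n:=\Oub_{{\bf F},{\bf F'}}(\F_n)$ and $\F':=\Oub_{{\bf F},{\bf F'}}(\F)$; by compactness of the relevant space of covers (cf. the compactness statement behind Proposition \ref{compactcov} and \cite{A3}), after passing to a subsequence $\F'_n$ converges to some cover $\Ga$, and it suffices to show $\Ga=\F'$.

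The key steps, in order, are: (i) verify that the combinatorial trees underlying $\Ga$ agree with those of $\F'$ — this reduces to the statement that ``which vertices of $T^Z$ separate at least three elements of $Z'$'' is stable under the limit, which follows because the relevant separation data of the finite marked set $Z'$ is determined by finitely many cross-ratio-type conditions that pass to the limit (here Lemma \ref{corostabpart} / Lemma \ref{compartition} on stable trees sharing a leaf set do the bookkeeping), and likewise on the source side since $Y'=F^{-1}(Z')$ and $F$ is fixed by the portrait; (ii) verify that the sphere maps $f'_{v}$ of $\Ga$ coincide with those of $\F'$: each internal vertex $v$ of $T^{Y'}$ is also an internal vertex of $T^Y$, the sphere $\S_v$ and the holomorphic map $f_v$ are obtained as limits in the unreduced cover $\F$, and the forgetting operation only changes which attaching points are recorded, not the map $f_v$ itself; so $f'_v$ in $\Ga$ equals $f_v$ equals $f'_v$ in $\F'$, and the attaching points match because $\T^{Y'}\lhd\T^Y$ is a closed condition. (iii) Conclude by uniqueness: by the previous theorem there is a \emph{unique} cover $\F''$ with $\F''\lhd\F$ over $Z'$, so $\Ga=\F'$; since every subsequence has a further subsequence converging to $\F'$, the whole sequence $\F'_n$ converges to $\F'$, which is continuity.

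\textbf{Main obstacle.} The delicate point is step (i): showing that the combinatorial structure of $\F'_n$ does not ``jump'' in the limit, i.e. that no vertex which separates three points of $Z'$ in the limit $T^Z$ was collapsed (or created) along the sequence $T^{Z_n}$. Equivalently, one must control the behaviour of attaching points of edges of $\T^{Z_n}$ that lie outside $Z'$ as they possibly collide in the limit; the worry is that such a collision could merge two vertices of $T^{Z'}_n$ or move an attaching point of $\T^{Z'}_n$ onto another marked point. This is handled precisely by the stable-tree lemmas of Section \ref{chap3} together with the fact that convergence in $\revTC$ already records the limiting separation pattern of the \emph{full} leaf set, hence a fortiori of the sub-collection $Z'$; but making this rigorous requires carefully invoking that the topology on spaces of marked trees of spheres is metrized by (a combination of) the combinatorial type and the cross-ratios of marked $4$-tuples, so that the forgetful map is continuous on each stratum and the strata fit together correctly. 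Once that is in place, steps (ii) and (iii) are essentially formal.
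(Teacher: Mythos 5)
The paper itself gives essentially no argument here: after constructing the restricted cover $\F'$ and showing it is the unique cover with $\F'\lhd\F$ over $Z'$, it simply declares ``in fact we have proven'' the continuity, treating it as immediate from the fact that $\Oub_{{\bf F},{\bf F'}}$ is a pure forgetting-and-restricting operation. Your proposal reconstructs an actual argument by a standard sequential scheme (extract a convergent subsequence of $\F'_n$, identify its limit with $\F'$ combinatorially and then sphere-by-sphere, invoke uniqueness of the sub-cover to upgrade from subsequences to the full sequence), which is a legitimate and more informative route than what the paper offers.

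However, you correctly flag, but do not resolve, the load-bearing step (i), and I would press you to be more explicit there. The assertion that the set of vertices of $T^{Z_n}$ separating at least three points of $Z'$ stabilizes in the limit is precisely the continuity of the forgetful map between Deligne-Mumford-type compactifications, and it must be invoked as such: convergence in $\revTC_{\bf F}$ is, stratum by stratum, convergence of cross-ratios of marked $4$-tuples, forgetting a subset of marked points only discards cross-ratios, and hence a convergent sequence of marked configurations has a convergent sequence of forgotten configurations whose limit is the forgotten limit. Lemmas \ref{corostabpart} and \ref{compartition} are purely combinatorial statements about two stable trees sharing a leaf set; they do the bookkeeping once you know which vertices survive, but they do not by themselves encode the limiting behavior. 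You need to appeal to the actual topology set up in \cite{A3} (equivalently, holomorphy of forgetful maps on $\overline{M}_{0,n}$) for step (i); with that stated, steps (ii) and (iii) do close via the uniqueness in the preceding theorem. So the outline is sound and supplies an argument where the paper leaves one implicit, but step (i) currently names the obstacle rather than clearing it.
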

%
%
%
%



\begin{thebibliography}{Kiw15}

\bibitem[Arfa]{A1}
Matthieu Arfeux.
\newblock Dynamics on trees of spheres.
\newblock Submitted.

\bibitem[Arfb]{A2}
Matthieu Arfeux.
\newblock Approximability of dynamical systems between trees of spheres.
\newblock Submitted.

\bibitem[Arfc]{A3}
Matthieu Arfeux.
\newblock Compactification and trees of spheres covers.
\newblock Submitted.

\bibitem[Arfd]{A5}
Matthieu Arfeux.
\newblock Berkovich spaces and Deligne-Mumford compactification.
\newblock on Arxiv.

\bibitem[BF13]{Nuria}
Bodil Branner and N{\'u}ria Fagella.
\newblock {\em Quasiconformal Surgery in Holomorphic Dynamics}, volume 141 of
  {\em Cambridge studies in advanced mathematics}.
\newblock Cambridge, first edition, 2013.

\bibitem[CP]{CP}
Guizhen Cui and Wenjuan Peng.
\newblock On the cycles of components of disconnected julia sets.
\newblock In preparation.


\bibitem[God]{God}
S\'ebastien Godillon.
\newblock A family of rational maps with buried Julia components.
\newblock {\em Ergodic Theory Dynam. Systems} 35, no. 6, 1846-1879, 2015.

\bibitem[Kiw15]{K2}
Jan Kiwi.
\newblock Rescaling limits of complex rational maps.
\newblock {\em Duke Math. J.}, 164(7):1437-1470, 2015.

\bibitem[Mil06]{DynInOne}
John Milnor.
\newblock {\em Dynamics in one complex variable}, volume 160 of {\em Annals of
  Mathematics Studies}.
\newblock Princeton University Press, Princeton, NJ, third edition, 2006.

\bibitem[Shi89]{S1}
Mitsuhiro Shishikura.
\newblock Trees associated with the configuration of {H}erman rings.
\newblock {\em Ergodic Theory Dynam. Systems}, 9(3):543--560, 1989.

\bibitem[Shi02]{S2}
Mitsuhiro Shishikura.
\newblock A new tree associated with {H}erman rings.
\newblock {\em S\=urikaisekikenky\=usho K\=oky\=uroku}, (1269):75--92, 2002.
\newblock Complex dynamics and related fields (Japanese) (Kyoto, 2001).

\bibitem[Sti]{Stim}
James Stimson.
\newblock Degree two rational maps with a periodic critical point.
\newblock Thesis, University of Liverpool, 1993.

\end{thebibliography}


\end{document}